\numberwithin{equation}{section}
\numberwithin{figure}{section}
\theoremstyle{plain}
\newtheorem {theorem}{Theorem}[section]
\newtheorem {lemma}[theorem]{Lemma}
\newtheorem {corollary}[theorem]{Corollary}
\theoremstyle{definition}
\newtheorem {definition}[theorem]{Definition}
\theoremstyle{remark}
\newtheorem {remark}[theorem]{Remark}
\newtheorem {observation}[theorem]{Observation}
\begin{document}

\date{}

\title{\Large {\bf Random collapsibility and $3$-sphere recognition}}

\author{Jo\~ao Paix\~ao and Jonathan Spreer}

\maketitle

\subsection*{\centering Abstract}

{\em
A triangulation of a $3$-manifold can be shown to be homeomorphic to the
$3$-sphere by describing a discrete Morse function on it with only two critical
faces, that is, a sequence of elementary collapses from the 
triangulation with one tetrahedron removed down to a single vertex.
Unfortunately, deciding whether such a sequence exist is believed to be
very difficult in general.

In this article we present a method, based on uniform spanning trees, to 
estimate how difficult it is to collapse 
a given $3$-sphere triangulation after removing a tetrahedron. In addition
we show that out of all $3$-sphere triangulations with eight vertices or less,
exactly $22$ admit a non-collapsing sequence onto a contractible non-collapsible
$2$-complex. As a side product we classify all minimal triangulations of the 
dunce hat, and all contractible non-collapsible $2$-complexes with at most $18$ 
triangles. This is complemented by large scale experiments on the collapsing
difficulty of $9$- and $10$-vertex spheres. 

Finally, we propose an easy-to-compute
characterisation of $3$-sphere triangulations which experimentally exhibit 
a low proportion of collapsing sequences, leading to a heuristic to produce 
$3$-sphere triangulations with difficult combinatorial properties.}

\medskip
\noindent
\textbf{MSC 2010: } 
{\bf 57Q15};  
57N12;  
57M15;   
90C59. 
\medskip
\noindent
\textbf{Keywords: } Discrete Morse theory, uniform spanning trees, 
collapsibility, local constructibility, dunce hat, triangulated 
manifolds, 3-sphere, complicated triangulations

\section{Introduction}
\label{sec:intro}

Collapsibility of triangulations and closely related topics, such as local
constructability, have been studied extensively over the past decades
\cite{Benedetti13RandomDMT,Durhuus95LC3Spheres,whitehead1939simplicial,
zeeman1966seminar}. If a triangulation is collapsible, its underlying 
topological space is contractible but the converse is not true
\cite{Adiprasito14RDMTII,Benedetti13NonCollapsible,Benedetti11LocConstrBalls,
Lutz04SmallExNonconstrSimplBallsSph,Zeeman64DunceHat}. Thus, collapsibility can 
be seen as a measure of how complicated a triangulation of a given contractible 
topological space or manifold is. Understanding this
complicatedness of triangulations is a central topic in the field of 
combinatorial topology with important consequences
for theory and applications.

For instance, recognising the $n$-dimensional (piecewise linear standard) 
sphere or ball -- a major challenge in the field -- is still a very difficult 
task in dimension three and even
undecidable in dimensions $\geq 5$ \cite{Volodin743SphereRec}. Nonetheless, 
collapsing heuristics together with standard homology calculations and the 
Poincar\'e conjecture solve the $n$-sphere recognition problem for many 
complicated and large inputs in high dimensions, see for example 
\cite{Benedetti13RandomDMT,Joswig14SphereRec}. In fact, when using standard 
input, existing heuristics are too effective to admit proper insight into the 
undecidability of the underlying problem. To address this issue, 
Benedetti and Lutz recently proposed a ``library of complicated triangulations''
providing challenging input to test new methods \cite{Benedetti13RandomDMT}.

Here we focus on the analysis of collapsibility for 
triangulations of the $3$-ball (typically given as a $3$-sphere with one 
tetrahedron removed). More precisely, we study this question using a 
{\em quantifying} approach. Given a triangulation of the $3$-ball, we
analyse the question of how likely it is that a collapsing sequence of the 
tetrahedra, chosen uniformly at random, collapses down to a single vertex.
A similar question has been studied in \cite{Benedetti13RandomDMT}
using the framework of discrete Morse theory. While the approach in 
\cite{Benedetti13RandomDMT} is efficient, provides valuable insights, and 
works in much more generality (arbitrary dimension and 
arbitrary topology of the triangulations), the probability distributions 
involved in the experiments are difficult to control. As a consequence,
the complicatedness of a given triangulation depends on the heuristic in use.

\medskip
In this article we present an approach to quantify the ``collapsing 
probability'' of a $3$-ball triangulation which can be phrased independently
from the methods in use. This probability can be estimated effectively
as long as there is a sufficient number of collapsing sequences.
We then use this approach to give an extended study of the ``collapsing
probability'' of small $3$-sphere triangulations with one tetrahedron removed.

\medskip
In addition, we decide for all $39$ $8$-vertex $3$-sphere triangulations 
with one tetrahedron removed whether or not they are {\em extendably collapsible}, 
that is, whether or not they have a collapsing sequence of the tetrahedra
which collapses onto a contractible non-collapsible $2$-dimensional 
complex: $17$ of them are, $22$ of them are not, see Theorem~\ref{thm:eight}. 
As a side product of this experiment we present a classification of all minimal
triangulations of the Dunce hat, cf. Theorem~\ref{thm:class}.

\medskip
A major motivation for this study is to find new techniques to tackle the famous 
$3$-sphere recognition problem.
Recognising the $3$-sphere is decidable due to Rubinstein's algorithm 
\cite{Rubinstein953SphereRec} which has since been implemented 
\cite{Burton04Regina,Burton09Regina} and 
optimised by Burton \cite{Burton14Crushing}. 
However, state-of-the art worst case running times are still exponential 
while the problem itself is conjectured to be polynomial time solvable 
\cite{Hass123SphereCoNP,Schleimer11SphereRecNP}. 
We believe that analysing tools -- such as the ones presented in this article -- 
and simplification procedures designed to deal 
with non-collapsible or nearly non-collapsible $3$-sphere triangulations 
(i.e. input with pathological combinatorial features) together with 
local modifications of triangulations such as Pachner moves is one way of 
advancing research dealing with this important question.

\subsection*{Contributions}

In Section~\ref{sec:ust}, we describe a procedure to uniformly sample 
collapsing sequences in $3$-ball triangulations, 
based on the theory of uniform spanning trees 
\cite{Aldous90RandomWalkUST,Broder89RandomWalkUST,
Guenoche83RandomSpanningTrees,Wilson96UST}.

\smallskip
In Section~\ref{sec:estimated}, we present extensive experiments on 
the collapsing probability of small $3$-sphere triangulations with one
tetrahedron removed. The experiments include a complete classification
of extendably collapsible $8$-vertex $3$-spheres with one tetrahedron removed
and a classification of $17$ and $18$ triangle triangulations of 
contractible non-collapsible $2$-complexes.

\smallskip
In Section~\ref{sec:nearlyNonColl} we describe an (experimental) 
hint towards triangulations which are difficult to collapse. 
The observation translates into heuristics to generate complicated 
triangulations.
%

\subsection*{Software}

Most of the computer experiments which have been carried out in this article
can be replicated using the {\em GAP} package {\em simpcomp}
\cite{simpcomp,simpcompISSAC,simpcompISSAC11,GAP4}. As of Version 2.1.1., 
{\em simpcomp} contains the functionality to produce {\em discrete Morse
spectra} using the techniques developed in this article as well as the techniques
from \cite{Benedetti13RandomDMT}.

The necessary data to replicate all other experiments can be found in
the appendices and/or are available from the authors upon request.

\section{Preliminaries}
\label{sec:prelim}

\subsection{Triangulations}

Most of this work is carried out in the $3$-dimensional simplicial setting. 
However, whenever obvious generalisations of our results and methods hold 
in higher dimensions, or for more general kinds of triangulations, we 
point this out.

By a {\em triangulated $d$-manifold} (or {\em triangulation of a 
$d$-manifold}) we mean a $d$-dimensional simplicial complex whose 
underlying topological space is a closed $d$-manifold. Note that in dimension
three, the notion of a triangulated $3$-manifold is equivalent to the one
of a combinatorial manifold since every $3$-manifold is equipped with a unique
PL-structure.

A triangulation of a $d$-manifold $M$ is given by a $d$-dimensional, pure,
abstract simplicial complex $C$, i.e., a set of subsets 
$\Delta \subset \{ 1 , \ldots , v \}$ each of order $| \Delta | = d+1$, called 
the {\em facets} of $M$.
The $i$-skeleton $\operatorname{skel}_i (M)$, that is, 
the set of $i$-dimensional faces of $M$ can then be
deduced by enumerating all subsets $\delta$ of order $| \delta | = i+1$ which
occur as a subset of some facet $\Delta \in M$. The $0$-skeleton is called the 
{\em vertices} of $M$, denoted $V(M)$, and the $1$-skeleton is referred to as 
the {\em edges} of $M$. The {\em $f$-vector} of $M$ is defined to be
$f(M) = (f_0, f_1, \ldots , f_d)$ where $f_i = |\operatorname{skel}_i (M)|$.
Note that in this article we often write $f_0 = v$ and $f_d = n$, and use 
$n$ as a measure of input size.

If in a triangulation $M$ every $k$-tuple of vertices spans a $(k-1)$-face
in $\operatorname{skel}_i (M)$, i.e., if $f_{k-1} = {|V(M)| \choose k}$, then 
$M$ is said to be {\em $k$-neighbourly}.

The {\em Hasse diagram} $\mathcal{H} (C)$ of a $d$-dimensional
simplicial complex $C$ is the directed $(d+1)$-partite graph whose nodes are the 
$i$-faces of $C$, $0 \leq i \leq d$, and whose arcs point from 
a node representing an $(i-1)$-face to a node representing an $i$-face if and only 
if the $(i-1)$-face is contained in the $i$-face.

The \textit{dual graph} or {\em face pairing graph} $\Gamma (M)$ 
of a triangulated $d$-manifold $M$ is the graph whose nodes represent the facets of $M$, 
and whose arcs represent pairs of facets of $M$ that are joined 
together along a common $(d-1)$-face. It follows that $\Gamma (M)$ is
$(d+1)$-regular.

\subsection{Uniform spanning trees and random walks}

Most graphs in this article occur as the dual graph $\Gamma(M)$
of some triangulated $3$-manifold $M$. To avoid confusion, we denote
the $0$- and $1$-simplices of a triangulation as vertices and edges and we  
refer to the corresponding elements of a graph as {\em nodes} and {\em arcs}.

A {\em spanning tree} of a graph $G = (V,E)$ is a tree $T = (V,E')$ such that
$E' \subset E$ covers all nodes in $V$. In other words, a spanning tree
of a graph $G$ is defined by a connected subset $E' \subset E$ of size 
$|E'| = |V|-1$ such that all nodes $v \in V$ occur as an endpoint of an 
arc in $E'$. A {\em uniform spanning tree} $T = (V,E')$ of $G=(V,E)$
is a spanning tree chosen uniformly at random from the set of all 
spanning trees of $G$. 

A {\em random walk of length $m$} in a graph $G=(V,E)$ is a sequence of 
random variables $( v_0, v_1, v_2, \ldots , v_m )$ 
taking values in $V$, such that $v_0 \in V$ is 
chosen uniformly at random and for each $v_i$, the vertex $v_{i+1}$ is chosen
uniformly at random from all nodes adjacent to $v_i$ in $G$.

\subsection{Collapsibility and local constructability}

Given a simplicial complex $C$, an $i$-face $\delta \in C$ is called 
{\em free} if its corresponding node in the Hasse diagram $\mathcal{H}(C)$
is of outgoing degree one. Removing a free face $\delta$ of a simplicial complex 
is called an {\em elementary collapse} of $C$, denoted by 
$C \searrow C \setminus \delta$. A simplicial complex $C$ is called
{\em collapsible} if there exist a sequence of elementary collapses 
$$ C \searrow C' \searrow C'' \searrow \ldots \searrow \emptyset , $$
in this case the above sequence is referred to as a {\em collapsing sequence}
of $C$ (sometimes we omit the last elementary collapse from a single 
vertex to the empty set and still refer to the sequence as a collapsing
sequence).

If, for a simplicial complex $C$, {\em every} sequence of removing free faces
leads to a collapsing sequence, $C$ is called {\em extendably collapsible}. If,
on the other hand, no collapsing sequence exist, $C$ is said to be
{\em non-collapsible}.

\medskip
Given a $d$-dimensional simplicial complex $C$, we say that $C$ is {\em locally 
constructible} or that $C$ {\em admits a local construction}, if there is a 
sequence of pure simplicial complexes $T_1, \ldots , T_n, \ldots T_N$ such that 
(i) $T_1$ is a $d$-simplex, (ii) $T_{i+1}$, $i+1 \leq n$, is constructed from 
$T_i$ by gluing a new tetrahedron to $T_i$ along one of its $(d-1)$-dimensional 
boundary faces, (iii) $T_{i+1}$, $i+1 > n$, is constructed from 
$T_i$ by identifying a pair of $(d-1)$ faces of $T_i$ whose intersection 
contains a common $(d-2)$-dimensional face, and (iv) $T_N = C$.

For $d=3$, locally constructible spheres were introduced by Durhuus and Jonsson
in \cite{Durhuus95LC3Spheres}. Locally constructible triangulations of
$3$-spheres are precisely the ones which are collapsible after removing
a facet due to a result by Benedetti and Ziegler 
\cite{Benedetti11LocConstrBalls}.

For the remainder of this article we sometimes call a triangulated 
$3$-sphere $S$ {\em collapsible} if it is locally constructible, i.e., if there 
exist a facet $\Delta \in S$ such that $S \setminus \Delta$ is collapsible.
This notion is independent of the choice of $\Delta$ (cf.
\cite[Corollary 2.11]{Benedetti11LocConstrBalls}). The idea behind this abuse of 
the notion of collapsibility is to refer to those $3$-sphere triangulations as 
collapsible which have a chance of being recognised by a collapsing heuristic.

\section{Collapsibility of $2$-complexes and uniform spanning trees}
\label{sec:ust}

In this section, we want to propose a method to quantify collapsibility 
of $3$-sphere triangulations (with one tetrahedron removed). Deciding 
collapsibility is hard in general 
but easy in most cases which occur in practice and thus methods to measure the
degree to which a triangulation is collapsible are of great help in 
the search for pathological, i.e., non-collapsible $3$-ball triangulations.
The idea is closely related to the concept of the discrete Morse spectrum 
as presented in \cite{Benedetti13RandomDMT},
the main difference being that our method is {\em independent} of the 
collapsing heuristic in use. This, however, comes at the 
cost of only focusing on triangulations of the $3$-sphere 
and possibly slight generalisations thereof.

Our method uses the facts that (i) collapsibility of arbitrary $2$-complexes 
is easy to decide by a linear time greedy type algorithm
\cite[Proposition 5]{Tancer12CollNPComplete}, (ii) spanning trees
of a graph can efficiently be sampled uniformly at random (see below for more
details), and (iii) the process of collapsing the $3$-cells of a $3$-manifold 
triangulation $M$ along a spanning tree of the dual graph $T$ is well defined.
That is, we can collapse all $3$-cells of $M$ along $T$ by first removing the 
$3$-cell $\Delta \in M$ corresponding to the root node of $T$ and then 
successively collapse all other $3$-cells through the $2$-cells of $M$ 
corresponding to the arcs of $T$, and this procedure does not depend on the
choice of $\Delta$, see \cite[Corollary 2.11]{Benedetti11LocConstrBalls}. 

More precisely, for a $3$-sphere triangulation $S$, we can efficiently
sample a spanning tree in the dual graph $T \subset \Gamma (S)$, collapse all 
$3$-cells of $S$ along $T$, and then decide collapsibility of the 
remaining $2$-complex in linear time in the number of facets of $S$. Our method 
leads to the following notion.
\begin{definition}[Collapsing probability]
  \label{def:pcoll}
  Let $S$ be a $3$-sphere triangulation and let $p \in [ 0,1 ]$ 
  be a (rational) number between zero and one. We say that $S$ has 
  {\it collapsing probability $p$} if the number of spanning trees leading to 
  a collapsing sequence of $S$ divided by the total number of 
  spanning trees of $S$ equals $p$.

  In particular, collapsing probability $0$ is equivalent to 
  non-collapsibility and collapsing probability $1$ is equivalent to
  extendable collapsibility.
\end{definition}
The above definition corresponds to a shortened version of what 
is called the {\em discrete Morse spectrum} in \cite{Benedetti13RandomDMT}.

\medskip
Given the notion of collapsing probability, the potential difficulty of deciding 
collapsibility for a $3$-sphere triangulation $S$ (with one tetrahedron 
removed) must be entirely encapsulated within its
extremely large number of possible spanning trees: if this number were small, 
we could simply try all spanning trees of $S$ until we
either find a collapsing sequence or conclude that $S$ 
(with one tetrahedron removed) is non-collapsible. 
But how many spanning trees of $\Gamma (S)$ exist?

The dual graph of any $3$-manifold triangulation is $4$-regular. 
Hence, following \cite{McKay83SpanningTreesRegGraphs} 
the number of spanning trees of a $3$-sphere triangulation 
with $n$ tetrahedra is bounded above by
$$ \# \textrm{ spanning trees } 
  \,\, < \,\, C \cdot \left ( \frac{27}{8} \right )^n \frac{\log n}{n} 
  \,\, < \,\, \frac{9}{2} \left ( \frac{27}{8} \right )^n. $$
Thus, enumerating spanning trees to decide collapsibility
does not seem like a viable option.

\medskip
However, the related task of sampling a spanning tree uniformly at 
random is efficiently solvable.
The first polynomial time algorithm to uniformly sample spanning trees in an 
arbitrary graph was presented by
Gu\'enoche in 1983 \cite{Guenoche83RandomSpanningTrees}. It has a running 
time of $O(n^3 m)$ where $n$ is the number of nodes and $m$ 
is the number of arcs of the graph. Hence, in the case of the $4$-valent 
dual graph of a triangulated $3$-manifold, 
the running time of Gu\'enoche's algorithm is $O(n^4)$. 
Since then many more deterministic algorithms were constructed with 
considerably faster running times.
Here, we want to consider a randomised approach. Randomised sampling algorithms
for spanning trees were first presented by Broder 
\cite{Broder89RandomWalkUST} and Aldous \cite{Aldous90RandomWalkUST}.
Their approach is based on a simple idea using random walks. Given a graph 
$G$, follow a random walk in $G$ until all nodes have been visited 
discarding all arcs on the way 
which close a cycle. The result can be shown to be a spanning tree 
chosen uniformly at random amongst {\em all} spanning trees of the graph.
The expected running time equals what is called the {\em cover time} of 
$G$, i.e., the expected time it takes 
a random walk to visit all nodes in $G$, with a worst case 
expected running time of $O(n^3)$. 
For many graphs, however, the expected running time is as low as $O(n \log n)$. 
The algorithm we want to use for our
purposes is an improvement of the random walk construction due to Wilson 
\cite{Wilson96UST} which always beats the cover time.
More precisely, the expected running time of Wilson's algorithm is $O(\tau)$ 
where $\tau$ denotes the expected 
number of steps of a random walk until it hits an already determined subtree 
$T' \subset G$ starting from a node
which is not yet covered by $T'$.

\begin{observation}
  Let $S$ be a $3$-sphere triangulation with $n$ tetrahedra and
  collapsing probability $p \in [0,1]$. Sampling 
  a uniform spanning tree in the dual graph of $S$ and testing
  collapsibility of the remaining $2$-complex is a Bernoulli trial 
  $$ X = \left \{ \begin{array}{ll} 1 &\textrm{ with probability } p; \\
    0 &\textrm{ else.} \end{array} \right . $$
  with polynomial running time.
  Sampling $N$ times yields $N$ such independent Bernoulli distributed random 
  variables $X_i$, $1\leq i \leq N$, and the maximum likelihood estimator 
  $$\hat{p} = \frac{1}{N} \sum \limits_{i=1}^{N} X_i $$
  follows a normalised Binomial distribution with $\mathbb{E} \hat{p} = p$ and
  $\operatorname{Var} \hat{p} = p(1-p)/N$.
  By Chebyshev's inequality this translates to
  $$ \mathbb{P} \left ( \, \mid \, \hat{p} \,-\, p \, \mid \, \leq \, \epsilon \right ) \,\, < \,\, \frac{p(1-p)}{N \epsilon^2} \,\, \leq \,\, \frac{1}{4 N \epsilon^2}. $$
  Since we want to decide collapsibility of $S$ (with one tetrahedron removed), 
	we want to distinguish $p$ 
  from $0$. Thus, setting $\epsilon = p/2$ we get
  $$ \mathbb{P} \left ( \, \, \mid \, \hat{p} \,-\, p \, \mid \, \leq \, p/2  \, \right ) < \frac{4 (1-p)}{N p} $$
  and for $p \to 0$ the error can be controlled by $N$ being (super-)linear in $p^{-1}$.

  \medskip
  Altogether, we note that collapsibility of $S$ (with one tetrahedron removed)
  can be rejected with a 
  high level of confidence by a polynomial procedure as long as 
  $p^{-1}$ is polynomial in the size of the input $n$.

\end{observation}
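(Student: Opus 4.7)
The plan is to verify the three structural claims of the observation in sequence, then chain them into the final polynomial-time rejection procedure. First, I would show that one trial---sample a uniform spanning tree in $\Gamma(S)$, collapse the $3$-cells along it, and test collapsibility of the residual $2$-complex---is a Bernoulli$(p)$ experiment running in polynomial time. Wilson's algorithm produces a uniformly random spanning tree in expected polynomial time; given the tree $T$, collapsing the $3$-cells of $S$ along $T$ is a well-defined deterministic procedure, independent of the root, by \cite[Corollary 2.11]{Benedetti11LocConstrBalls}; and collapsibility of the remaining $2$-complex can be decided in linear time by \cite[Proposition 5]{Tancer12CollNPComplete}. By Definition~\ref{def:pcoll}, the proportion of spanning trees leading to a successful collapse is exactly $p$, so the indicator $X$ is Bernoulli$(p)$.

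Second, since the $N$ trials $X_1,\ldots,X_N$ are drawn independently, $N\hat{p} = \sum_{i=1}^{N} X_i$ is Binomial$(N,p)$. Linearity of expectation gives $\mathbb{E}\hat{p} = p$, and independence together with $\operatorname{Var} X_i = p(1-p)$ gives $\operatorname{Var}\hat{p} = p(1-p)/N$. Chebyshev's inequality applied to $\hat{p}$ with threshold $\epsilon$ then yields
\[
  \mathbb{P}\bigl(\,|\hat{p}-p| > \epsilon \,\bigr) \;<\; \frac{p(1-p)}{N\epsilon^2} \;\leq\; \frac{1}{4 N \epsilon^2},
\]
where the final inequality uses $p(1-p) \leq 1/4$.

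Third, because the ultimate goal is to separate $p = 0$ (non-collapsibility) from $p > 0$, the natural deviation scale is $\epsilon = p/2$: any realisation of $\hat{p}$ within $p/2$ of the truth is bounded away from zero. Substituting $\epsilon = p/2$ into the Chebyshev bound yields $4(1-p)/(Np)$, which can be driven below any prescribed confidence level by choosing $N$ a sufficiently large multiple of $p^{-1}$ (super-linear in $p^{-1}$ gives vanishing error). Since each trial costs polynomial time and the total number of trials is polynomial in $p^{-1}$, the full procedure is polynomial in $n$ whenever $p^{-1}$ is polynomial in $n$, as claimed.

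I do not expect a genuine obstacle here: every step is either a standard probabilistic identity (Binomial variance, Chebyshev) or a direct invocation of a result already recalled in the preliminaries. The only point that warrants care is the running-time bookkeeping for a single trial, since the expected cost of Wilson's algorithm is governed by the mean hitting time $\tau$ in $\Gamma(S)$ rather than by $n$ alone; one should note that for the $4$-regular dual graphs under consideration, the worst-case bound $O(n^3)$ from the Broder--Aldous analysis suffices to keep the per-trial cost polynomial and hence the whole procedure polynomial under the stated hypothesis on $p^{-1}$.
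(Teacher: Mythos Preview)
Your proposal is correct and follows exactly the same approach as the paper: the observation is stated with its justification inline, and you have simply unpacked those same three steps (polynomial-time Bernoulli trial via Wilson's algorithm, collapsing along the tree, and Tancer's linear-time test; Binomial mean and variance; Chebyshev with $\epsilon = p/2$), citing the same results from the preliminaries. One small but welcome correction: you write Chebyshev's inequality with $|\hat p - p| > \epsilon$, which is the right direction; the paper's displayed inequality with $\leq \epsilon$ is evidently a typo.
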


Note that, using Wilson's algorithm, every sampling procedure only depends on 
the size of the triangulation by a factor which, in average, has running
time less than $O(n \log n)$. This makes the approach well-suited for computer 
experiments on larger triangulations where a higher proportion of 
triangulations with a low collapsing rate is conjectured.

\section{Estimated collapsing probabilities for small $3$-sphere
triangulations}
\label{sec:estimated}

A small triangulation of a $3$-sphere admits very few or no
spanning trees in the dual graph, which lead to 
non-collapsing sequences. As the size of
triangulations increase, we expect that the number of 
non-collapsible sequences increases as well. See \cite{Joswig14SphereRec}
for experiments supporting this claim in higher dimensions.
However, given a sequence of somewhat ``averagely complicated'' $3$-sphere 
triangulations in increasing size, the question of how exactly the 
{\em proportion} of collapsing sequences to non-collapsing sequences changes, 
that is, how quickly (if at all) the collapsing probability $p$ decreases, 
is an interesting question with deep implications for important problems
in the field of computational $3$-manifold topology. One major difficulty in 
this context comes from the fact that it is not at all clear what is meant by an
``averagely complicated'' $3$-sphere triangulation.

\medskip
In this section, we use the method of uniformly sampling spanning trees
described above, together with the classification of $3$-sphere triangulations
up to ten vertices to get a more thorough overview in the case of known small 
triangulations.

\subsubsection*{$3$-spheres with up to $8$-vertices}

It is well-known that all $3$-balls with seven or less vertices are extendably
collapsible \cite{Bagchi08UniqWalkup9V3DimKleinBottle}. 
Hence, all $v$-vertex $3$-sphere triangulations with $v \leq 7$ must have 
collapsing probability $1$.

There are $39$ distinct $8$-vertex triangulations of the $3$-sphere, first 
listed by Gr\"unbaum and Sreedharan in \cite{Gruenbaum67Enum8vtxSpheres}.
We use their notation for the remainder of this section. Two of them are 
non-polytopal, one of them known as {\em Barnette's sphere} 
\cite{Barnette69BarnettesSphere}, the other one known as {\em Br\"uckner's sphere} 
(see \cite{Gruenbaum67Enum8vtxSpheres} where the sphere, first described in 
\cite{Brueckner09FourPolytope}, is first shown to be non-polytopal). 

The collapsibility of these $3$-sphere triangulations 
(with one tetrahedron removed) was studied in \cite{Benedetti13NonExtColl3Ball},
where the authors showed that three of the $39$ spheres contain a collapsing 
sequence onto a dunce hat and hence have collapsing probability $< 1$. 

Here, we refine this study by a complete description of $8$-vertex
$3$-spheres with collapsing probability $1$ and a list of non-perfect
Morse functions for the remaining cases. Namely, we  
present a computer assisted proof of the following statement.

\begin{theorem}
  \label{thm:eight}
  There are $17$ $8$-vertex triangulations of the $3$-sphere which, after
  removing any tetrahedron, are extendably collapsible. In the Gr\"unbaum,
  Sreedharan notation these are
  $$ P_{1}, P_{2}, P_{3}, P_{4}, P_{5}, P_{6}, P_{7}, P_{9}, P_{10}, P_{13}, P_{14}, P_{15}, P_{16}, P_{17}, P_{18}, P_{21}, P_{34}. $$
  The remaining $22$ $8$-vertex triangulations of the $3$-sphere triangulations
  admit a collapsing sequence onto a contractible non-collapsible 
  $2$-complex.
\end{theorem}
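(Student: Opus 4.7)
The plan is a computer-assisted classification working directly from the explicit list of $39$ combinatorial types of $8$-vertex $3$-spheres due to Gr\"unbaum and Sreedharan. For each sphere $S$ in that list I would fix an arbitrary tetrahedron $\Delta$---the answer is independent of this choice by \cite[Corollary 2.11]{Benedetti11LocConstrBalls}---and form the $3$-ball $B = S\setminus\Delta$. The theorem then amounts to classifying each such $B$ as either extendably collapsible or not, and, in the latter case, producing one explicit collapsing sequence terminating at a contractible non-collapsible $2$-complex.

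The crucial reduction is that the ``tetrahedral phase'' of any collapsing sequence of $B$ is encoded by a spanning tree $T$ of the dual graph $\Gamma(S)$ rooted at the node of $\Delta$: each elementary collapse $(\sigma,\Delta')$ with $\sigma$ a free triangle and $\Delta'$ its unique incident tetrahedron consumes exactly one arc of $\Gamma(S)$. The complex remaining after this phase is the $2$-complex $K_T$ consisting of the full $1$-skeleton of $S$ together with the triangles corresponding to non-tree arcs of $\Gamma(S)$, and $K_T$ depends only on $T$, not on the order of traversal. Consequently, a non-extendability certificate always has the form ``a spanning tree $T$ plus a stuck $2$-dimensional collapse of $K_T$'', and extendable collapsibility of $B$ is decided by a finite search over all such pairs.

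My proposed implementation is therefore: (i) for each $S$, enumerate the spanning trees of $\Gamma(S)$ via reverse search; (ii) for each $T$, construct $K_T$ and test its extendable collapsibility by a DFS branching over admissible free-edge choices, with the linear-time greedy test of \cite[Proposition~5]{Tancer12CollNPComplete} used as both an acceptance criterion and a pruner; (iii) declare $S$ extendably collapsible exactly when every branch completes with the empty complex, and otherwise record the first non-collapsible $2$-subcomplex encountered as the required witness. The $17$ positive cases in the statement are then certified by the exhaustive search, and the $22$ witnesses produced in the negative cases feed into the accompanying classification referenced in Theorem~\ref{thm:class}. Note that non-extendability alone can also be discovered very quickly using Wilson's uniform sampler from Section~\ref{sec:ust}; the exhaustive search is really only needed to certify the positive side.

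The main obstacle is precisely this positive side: an $8$-vertex $3$-sphere has $|V(\Gamma(S))|\le 22$ but its dual graph can admit on the order of $10^7$ to $10^9$ spanning trees, and for each of these the number of admissible free-edge orderings in $K_T$ can be substantial on its own. Keeping the overall verification tractable relies on three accelerations: generating spanning trees with constant amortised overhead, pruning a branch of the tree enumeration as soon as the partial tree already forces a $2$-dimensional collapse obstruction, and hashing isomorphism classes of intermediate $2$-complexes so that the greedy collapse is not recomputed on states already visited in sibling branches. Once these optimisations are in place, the verification for all $39$ spheres and the extraction of the $22$ witness $2$-complexes reduce to a routine, if heavy, machine computation.
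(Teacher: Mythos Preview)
Your approach is correct and would establish the theorem, but it takes a genuinely different route from the paper for the positive (extendably collapsible) side.

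For the $22$ negative cases you and the paper agree: exhibit a single spanning tree $T$ with $K_T$ non-collapsible. The paper locates these by uniform sampling and records the certificates in Appendix~\ref{app:eightV}. For the $17$ positive cases, however, the paper never enumerates spanning trees. It first proves (Lemma~\ref{lem:2compl}) that a contractible non-collapsible $2$-complex needs at least $8$ vertices and $17$ triangles; since $K_T$ has exactly $n+1$ triangles, this immediately disposes of the eight spheres with $n\le 15$ via Corollary~\ref{cor:extcoll}. For the remaining nine spheres ($n\in\{16,17\}$) the paper establishes an independent classification (Theorem~\ref{thm:class}) of all $87$ contractible non-collapsible $2$-complexes on $8$ vertices with $17$ or $18$ triangles, and then checks that none of these $87$ complexes embeds as a labelled subcomplex of $\operatorname{skel}_2(S)$. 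Any non-collapsible $K_T$ would have to be one of these $87$ complexes sitting inside $\operatorname{skel}_2(S)$, so their absence certifies extendable collapsibility. What the paper's route buys is that Theorem~\ref{thm:class} becomes a theorem in its own right rather than a by-product; what your route buys is uniformity---no auxiliary classification is needed, and the same loop handles all $39$ spheres.

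Two remarks on your implementation. First, your DFS over free-edge orderings of $K_T$ is unnecessary: for $2$-dimensional complexes collapsibility and extendable collapsibility coincide (this is precisely why the greedy algorithm of \cite[Proposition~5]{Tancer12CollNPComplete} is correct), so a single greedy pass per $K_T$ suffices and the inner loop drops to linear time. This matters for your feasibility estimate. Second, a small correction: the $8$-vertex $3$-spheres have at most $20$ tetrahedra (attained by the $2$-neighbourly ones), not $22$.
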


In order to construct a computer assisted proof for Theorem~\ref{thm:eight}, 
we first need to make some theoretical observations.

\begin{lemma}
  \label{lem:2compl}
  Let $C$ be a contractible non-collapsible simplicial $2$-complex. Then $C$ 
  must have at least $8$ vertices and $17$ triangles.
\end{lemma}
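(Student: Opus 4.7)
My plan is to reduce to a minimal example, apply Euler's formula together with a degree bound, and close the remaining gap by enumeration.

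First I would argue that a minimum-$f_2$ contractible non-collapsible $2$-complex $C$ has no free edge. Suppose $\delta$ is a free edge of $C$ and let $C' = C \setminus \delta$; then $f_2(C') = f_2(C)-1$, $C'$ is contractible (elementary collapses preserve homotopy type), and $C'$ must itself be non-collapsible, since otherwise prepending the collapse $C \searrow C'$ to a collapsing sequence of $C'$ would show $C$ is collapsible. This contradicts minimality of $f_2(C)$. Hence every edge of $C$ lies in $0$ or at least two triangles; analogous iteration for free vertices lets me also assume every vertex lies in at least two edges. For simplicity I shall further assume $C$ is pure $2$-dimensional; the possibility of dangling edges can be excluded by a separate short argument using contractibility.

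Double counting edge--triangle incidences then gives $3 f_2 \geq 2 f_1$, and combining this with $\chi(C) = f_0 - f_1 + f_2 = 1$ yields
\[
  f_2 \;\geq\; 2(f_0-1).
\]
In particular $f_0 \geq 10$ already forces $f_2 \geq 18$. Furthermore, if equality $3 f_2 = 2 f_1$ holds then every edge lies in exactly two triangles, so $C$ is a closed $2$-dimensional pseudomanifold; as no such complex is contractible, equality is impossible. This argument rules out the pairs $(f_0, f_2) = (8,14)$ and $(9,16)$ at once, leaving the pairs $(f_0, f_2) \in \{(8,15),(8,16)\}$ and the possibility $f_0 \leq 7$ as the only remaining candidates with $f_2 < 17$.

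For $f_0 \leq 7$ the degree conditions restrict the face vector so severely that direct inspection (or appeal to the classical fact that the dunce hat is the smallest contractible non-collapsible $2$-complex) shows every such complex is collapsible. The main obstacle, and the point where the Euler bound falls short by $3$, is the two exceptional pairs on $8$ vertices. Here the excess $3 f_2 - 2 f_1$ equals $1$ or $2$, so at most two edges meet more than two triangles, and the space of possible edge--triangle incidence patterns is small enough to enumerate in full. For each candidate I would check contractibility (via homology together with simple connectivity) and apply the linear-time collapsibility test of \cite{Tancer12CollNPComplete} to verify whether a collapsing sequence exists. This finite case analysis, expected to be computer-assisted, should dispose of the two remaining pairs and is consistent with the classification of minimal dunce-hat triangulations announced in Theorem~\ref{thm:class}.
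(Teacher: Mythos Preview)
Your reduction to a minimal counterexample and the Euler/degree computation are fine, but they only yield $f_2 \geq 2(f_0-1)$, and after your pseudomanifold observation $f_2 \geq 2f_0-1$. For $f_0=8$ this is $f_2\geq 15$, three short of the target. You recognise this and propose to close the gap by a case analysis that you do not actually carry out; as written, the argument is incomplete. Moreover, your fallback appeals (``the classical fact that the dunce hat is the smallest contractible non-collapsible $2$-complex'', and consistency with Theorem~\ref{thm:class}) are essentially restatements of what has to be proved, so they cannot be used here.

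The paper avoids any enumeration for this lemma by one extra idea you are missing. Look at the formal sum $\sigma$ of all triangles as a $2$-chain over $\mathbb{F}_2$. Since $C$ is contractible, $H_2(C,\mathbb{F}_2)=0$, so $\partial\sigma\neq 0$; but $\partial\sigma$ is exactly the set of edges of odd degree, and being a boundary it is a $1$-cycle, hence (in a simplicial complex) contains a simple closed walk of length at least three. Thus at least three edges have degree $\geq 3$. Feeding this into your degree count gives $3f_2 \geq 2f_1 + 3$, and the Euler relation then yields $f_2 \geq 2f_0+1$ on the nose. With the cited bound $f_0\geq 8$ this gives $f_2\geq 17$ directly, no case analysis needed. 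This $\mathbb{F}_2$-boundary trick is precisely the ``missing $+3$'' in your inequality.
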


\begin{proof}
  Let $C$ be a contractible, non-collapsible $2$-complex of minimal size.
  Since $C$ is contractible, we must have vanishing $2$-homology.
  In particular $H_2 (C, \mathbb{F}_2) = 0$, where $\mathbb{F}_2$
  is the field with two elements. Look at the formal sum $\sigma$ of all 
  triangles of $C$. $\sigma$ is a $2$-chain and its boundary $\partial \sigma$ 
  contains all edges of $C$ of odd degree.

  If $C$ has no edges of odd degree then $\sigma$ is a non-vanishing
  element of $H_2 (C, \mathbb{F}_2)$, contradiction.
  Hence, $C$ must have edges of odd degree. In addition, since the edges of odd 
  degree are a boundary, they must form a closed cycle and since $C$ is 
  simplicial, this cycle must be of length at least three.

  Since $C$ is minimal non-collapsible, no edge can be of degree one, and 
  altogether all edges must be of degree at least two and at least three edges 
  must be of degree at least three.

  \medskip
  Let $f(C) = (n, f_1, f_2)$ be the $f$-vector of $C$. 
  Since $C$ is contractible, $C$ has Euler characteristic one.
  Hence
  $$ \begin{array}{ccc}
    1 &=& n - f_1 + f_2 \\
    f_2 &=& \frac13 \sum \limits_{e \textrm{ edge of } C} \operatorname{deg}(e),
  \end{array} $$
  and $f_2$ is minimal if and only if three edges are of degree three and
  all other edges are of degree two. Inserting these degrees into the
  second equation yields
  $$ f_2 \geq \frac23 f_1 + 1 $$
  and using the first equation we get
  $$ f_2 \geq 2n +1. $$ 
  Following the results in \cite{Bagchi05CombTrigHomSpheres} we have $n \geq 8$
  and thus $C$ must have at least $17$ triangles.
  %
  %
\end{proof}

\begin{corollary}
  \label{cor:extcoll}
  Let $S$ be a $3$-sphere triangulation with $f$-vector
  $f(S) = (v, v+n, 2n, n)$, and let $B$ be a $3$-ball obtained from
  $S$ by removing a tetrahedron. If $v < 8$ or $n < 16$ then 
  $B$ is extendably collapsible.  
\end{corollary}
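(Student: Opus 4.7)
The plan is to argue by contradiction using Lemma~\ref{lem:2compl}. Suppose $B$ is not extendably collapsible; then, in the formulation used in this paper, there is a spanning tree $T \subset \Gamma(S)$ rooted at $\Delta$ such that collapsing the $n-1$ remaining tetrahedra of $B$ along $T$, and then greedily collapsing the resulting $2$-complex, terminates at some non-empty subcomplex $C'$. Since elementary collapses preserve homotopy type and $B$ is a $3$-ball, $C'$ is contractible; and by construction $C'$ has no free face, so it is a contractible non-collapsible $2$-complex.

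I then estimate the size of $C'$ in terms of $v$ and $n$. Since $C' \subseteq B$, the vertex count is bounded by $|V(C')| \leq v$. For the triangle count: excising the top-dimensional face $\Delta$ from $S$ does not destroy any lower-dimensional face, so $B$ still carries all $2n$ triangles of $S$. Each elementary collapse of a tetrahedron along $T$ removes precisely one triangle (its free face), so after all $n-1$ such collapses we are left with $2n-(n-1) = n+1$ triangles; the subsequent $2$-dimensional collapses can only decrease this count, so $f_2(C') \leq n+1$. If $v < 8$, then $|V(C')| \leq v < 8$, contradicting the vertex bound of Lemma~\ref{lem:2compl}; if $n < 16$, then $f_2(C') \leq n+1 \leq 16 < 17$, contradicting the triangle bound. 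Either way we obtain a contradiction, so $B$ must be extendably collapsible.

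The only genuinely subtle point is the very first step, namely that failure of extendable collapsibility yields a contractible non-collapsible $2$-subcomplex rather than a stuck complex of higher dimension. Under the paper's definition — extendable collapsibility being phrased through collapsing sequences of the tetrahedra followed by collapses of the remaining $2$-complex — this is built in, so the main work is just the elementary face count above. If one instead insisted on a fully dimension-agnostic definition, one would additionally have to argue that any maximal greedy collapse of a $3$-ball can be arranged to collapse all tetrahedra before getting stuck, which I expect to be the main obstacle in that more general setting.
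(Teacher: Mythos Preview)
Your proof is correct and follows essentially the same route as the paper's: collapse the tetrahedra along a spanning tree, count that the remaining $2$-complex has at most $v$ vertices and $n+1$ triangles, and invoke Lemma~\ref{lem:2compl}. The paper's version is terser (it omits the explicit contradiction framing and the further collapse to $C'$), but the substance is identical; your closing remark about the dimension-agnostic definition is a nice observation but, as you note, is not needed under the paper's conventions.
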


\begin{proof}
  A $3$-ball $B$ is extendably collapsible if, after removing its $3$-cells
  along a spanning tree of the dual graph, the remaining contractible 
  $2$-complex must be collapsible.

  After collapsing the tetrahedra of $S$ along a spanning tree the remaining
  $2$-complex has $(n+1)$ triangles and $v$ vertices. The result now follows
  from Lemma~\ref{lem:2compl}. 
\end{proof}

The observation made in Corollary~\ref{cor:extcoll} can be extended to 
$3$-sphere triangulations $S$ with a larger number $n \geq 16$ of tetrahedra. 
If we can show that the $2$-skeleton of $S$ does not contain a contractible 
non-collapsible $2$-complex with at most $n+1$ triangles, then $S$ (with one
tetrahedron removed) must
be extendably collapsible. In order for this approach to work, we need to 
know all such $2$-complexes up to $n+1$ triangles. Like any other attempt
to exhaustively classify small triangulations, this rapidly becomes infeasible
with $n$ growing larger. However, in the border case of $16 \leq n \leq 17$
this task turns out to be well in reach.

\begin{theorem}
  \label{thm:class}
  The only non-collapsible contractible simplicial complexes
  with $8$ vertices and $17$ triangles are the seven minimal triangulations
  of the dunce hat shown in Figure~\ref{fig:dH}.
  Furthermore, the only non-collapsible contractible simplicial complexes
  with $8$ vertices and $18$ triangles are the $19$ minimal saw-blade complexes
  with four, three, and two blades shown in Figures~\ref{fig:sb4}, 
  \ref{fig:sb3}, and \ref{fig:sb2} respectively, and the $61$ triangulations
  of the Dunce hat listed in Appendix~\ref{app:18}.
\end{theorem}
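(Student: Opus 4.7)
The plan is to carry out a computer-assisted classification, driven by the structural constraints extracted from the proof of Lemma \ref{lem:2compl}.

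For the $17$-triangle case, the bound $f_2 \geq 2n+1$ is attained with equality, and the proof of Lemma \ref{lem:2compl} then forces a very rigid profile: exactly three edges of degree three (forming a $3$-cycle, since they are the odd-degree edges of an $\mathbb{F}_2$-boundary), $21$ edges of degree two, and no free edges. I would first generate all simplicial $2$-complexes on $8$ labelled vertices with $f$-vector $(8,24,17)$ matching this degree profile, then filter for contractibility via a direct homology computation and for non-collapsibility via the greedy free-face algorithm mentioned in Section \ref{sec:ust}. Finally I would collapse the list to simplicial isomorphism classes using a canonical labelling of the Hasse diagram, and compare the survivors against the seven triangulations of the dunce hat shown in Figure \ref{fig:dH}.

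For the $18$-triangle case, the $f$-vector is $(8,25,18)$ and the edges carry degree sum $54$. A short counting argument, analogous to the one in Lemma \ref{lem:2compl} but with $f_2 = 2n+2$ instead of $2n+1$, restricts the feasible edge-degree sequences to a short list (roughly, either four edges of degree three and $21$ of degree two, or one degree-four edge, two degree-three edges, and $22$ degree-two edges). The same $\mathbb{F}_2$-boundary argument still forces the odd-degree edges to form a closed cycle, and no edge may be free. I would then repeat the enumerate/filter/classify pipeline for each feasible degree sequence and check that the resulting isomorphism classes decompose into exactly the $19$ saw-blade complexes with four, three, and two blades in Figures \ref{fig:sb4}, \ref{fig:sb3}, \ref{fig:sb2}, together with the $61$ dunce hat triangulations listed in Appendix \ref{app:18}.

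The main obstacle is keeping the enumeration tractable: even with only $8$ vertices the number of abstract $2$-complexes with the target $f$-vector is enormous. I would handle this by building complexes incrementally --- fixing the cycle of odd-degree edges first, then attaching the remaining triangles one at a time while immediately pruning branches that introduce a free face or violate the allowed edge degrees --- so that the non-collapsibility requirement drastically shrinks the search space. Once a small finite list of candidates is obtained, identifying the underlying space (dunce hat versus a saw-blade pattern) is a routine check via standard simplicial invariants together with explicit recognition of the identification pattern along the boundary cycle, which finishes the proof.
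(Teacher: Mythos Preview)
Your proposal outlines a valid computer-assisted classification but takes a different route from the paper. The paper exploits a topological observation you do not use: since every edge has degree two or three and the degree-three edges form a simple cycle, ungluing along that cycle unfolds the complex into a triangulated disk --- a $9$-gon on $14$ vertices in the $17$-triangle case, a $12$-gon on $16$ vertices for $18$ triangles. This reduces the search to enumerating planar disk triangulations with the Brinkmann--McKay tool \texttt{plantri}, after which one tests which boundary identifications fold back to a valid $8$-vertex contractible non-collapsible simplicial complex and sorts out isomorphic copies. Your direct enumeration of $2$-complexes on $8$ labelled vertices with degree-profile pruning should in principle reach the same list, but it forgoes this disk structure and the ready-made planar enumeration, so completeness is harder to certify and the search is less controlled. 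One small slip: your second candidate degree sequence for $18$ triangles (a degree-four edge together with two degree-three edges) is actually excluded by the very $\mathbb{F}_2$-boundary argument you invoke, since two odd-degree edges cannot close up into a cycle in a simplicial complex; only the profile with four degree-three edges (necessarily a single $4$-cycle) and twenty-one degree-two edges survives, which is precisely the paper's assertion that only degrees two and three occur.
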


\begin{proof}
  Since $n \leq 18$, no edges of degree other than two and three can exist,
  and the edges of degree three must form a simple cycle.
  By ungluing the edges of degree three, an $8$-vertex, $17$-triangle
  non-collapsible contractible $2$-complex can thus be represented as a 
  $14$-vertex triangulation of the disk with a nine-gon as boundary plus
  some boundary identifications. Analogously, an $8$-vertex, $18$-triangle 
  non-collapsible contractible $2$-complex can be represented as a $16$-vertex 
  triangulation of the disk with a $12$-gon as boundary and some boundary
  identifications.

  \begin{center}
    \begin{figure}[htb]
      \includegraphics[width=\textwidth]{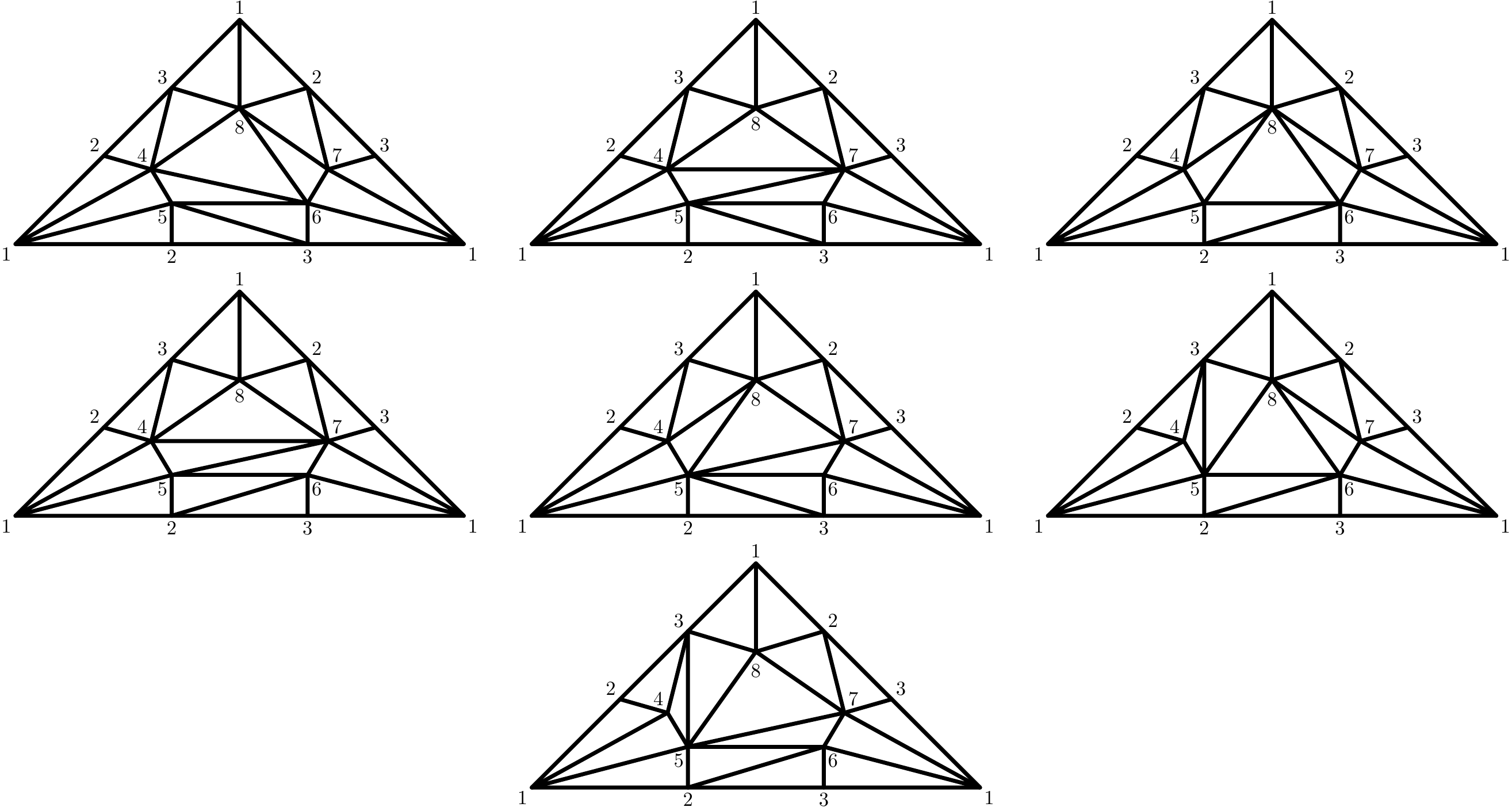}
      \caption{A classification of minimal triangulations of the Dunce hat.
        \label{fig:dH}}
    \end{figure}
  \end{center}

  Using the software for planar graph enumeration by Brinkmann and McKay 
  \cite{Brinkmann07FastGenPlanarGraphs,plantri} together with its enormously 
  useful plug-in framework, we classify all such disks, only keeping those
  which can be folded up to result in an $8$-vertex $17$ ($18$) triangle 
  contractible non-collapsible simplicial complex with $21$ degree two and 
  $3$ ($4$) degree three edges. Sorting out isomorphic copies yields
  seven minimal triangulations of the dunce hat (drawn in Figure~\ref{fig:dH}),
  $61$ dunce hats with $18$ triangles, and $19$ examples of three distinct 
  types of so-called {\em saw-blade complexes}, cf. \cite{Joswig14SphereRec}
  (see Figures~\ref{fig:sb4}, \ref{fig:sb3}, and \ref{fig:sb2} for the $19$ saw 
  blade complexes and Appendix~\ref{app:18} for a list of all $18$-triangle
  complexes).
\end{proof}

  \begin{center}
    \begin{figure}[htb]
      \centerline{\includegraphics[width=0.7\textwidth]{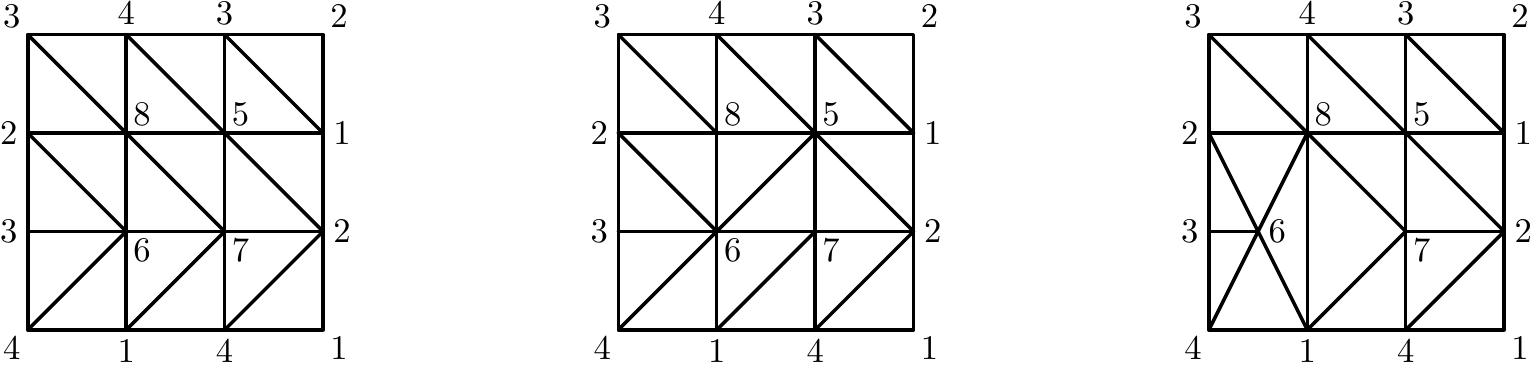}}
      \caption{A classification of minimal saw-blade complexes with four blades.
        \label{fig:sb4}}
    \end{figure}
  \end{center}

With these results in place we can now describe a computer assisted proof
of Theorem~\ref{thm:eight}.

  \begin{center}
    \begin{figure}[htb]
      \centerline{\includegraphics[width=0.435\textwidth]{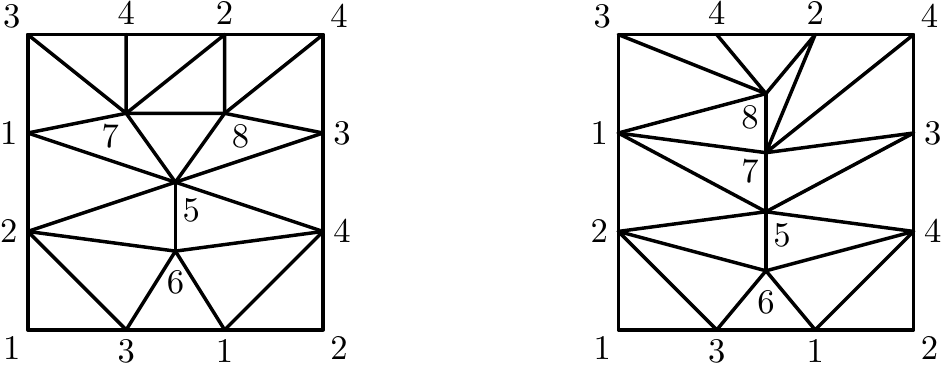}}
      \caption{A classification of minimal saw-blade complexes with three
        blades. \label{fig:sb3}}
    \end{figure}
  \end{center}

\begin{proof}[Proof of \ref{thm:eight}]
  Eight of the $39$ $8$-vertex $3$-spheres have $15$ or less 
  tetrahedra and thus, by Corollary~\ref{cor:extcoll}, must have collapsing 
  probability $1$ (i.e., they are extendably collapsible after removing a 
  tetrahedron). In Gr\"unbaum and Sreedharan's labelling these are the 
  triangulations $P_{1}$ to $P_{7}$, and~$P_{13}$.

  Furthermore, using our uniform sampling technique of spanning trees we
  are able to collapse $22$ of the remaining $31$ triangulations to a 
  contractible non-collapsible $2$-complex and thus show that these have 
  collapsing probability $<1$, i.e.,
  that none of them, after removing a tetrahedron, is extendably collapsible.
  A certificate for the non-extendable collapsibility for each of the $22$ 
  cases, in form of a non-perfect Morse function, can be found in 
  Appendix~\ref{app:eightV}. 

  The remaining nine cases, triangulations $P_{9}, P_{10}, P_{14}, \ldots , 
  P_{18}, P_{21}, P_{34}$ in \cite{Gruenbaum67Enum8vtxSpheres}, have between 
  $16$ and $17$ tetrahedra. 
  Following the proof of Corollary~\ref{cor:extcoll}, after collapsing the 
  tetrahedra of an $n$-tetrahedra $8$-vertex $3$-sphere $S$ along a spanning 
  tree the remaining $2$-complex $C$, which by construction must be 
  contractible, has $(n+1)$ triangles and at most $8$ vertices. Combining 
  Lemma~\ref{lem:2compl} and Theorem~\ref{thm:class} this means that $C$ either 
  collapses onto a point or it is isomorphic to one of the seven minimal 
  triangulations of the dunce hat, for $16 \leq n \leq 17$, or it is isomorphic
  to one of the $80$ contractible non-collapsible $2$-complexes with $18$ 
  triangles, in the case of $n=17$ only.

  \begin{center}
    \begin{figure}[htb]
      \centerline{\includegraphics[width=\textwidth]{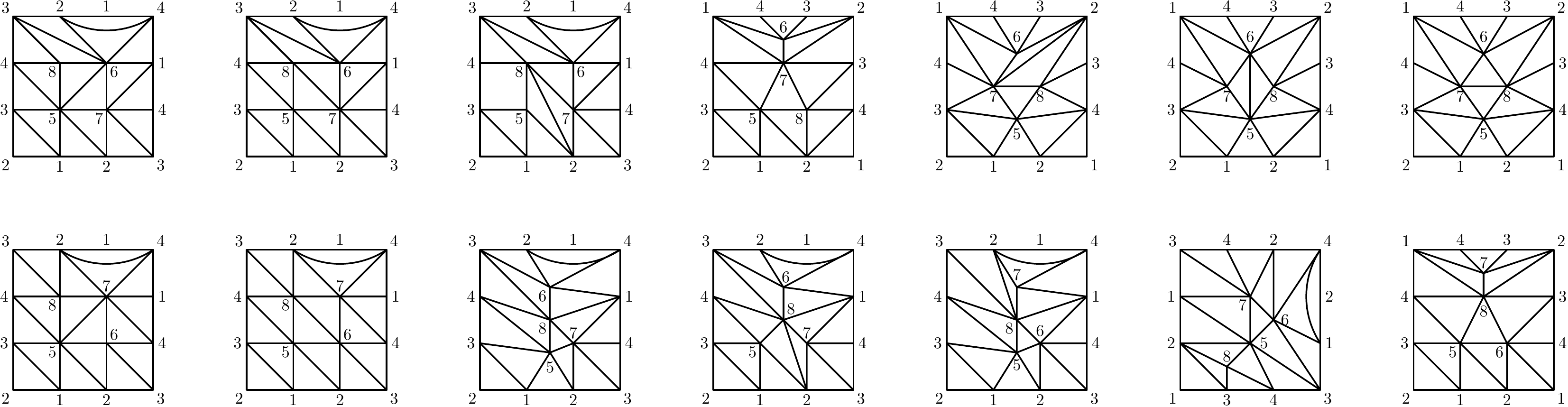}}
      \caption{A classification of minimal saw-blade complexes with two blades.
        \label{fig:sb2}}
    \end{figure}
  \end{center}

  An exhaustive search for all labellings of all $87$ complexes in the
  $2$-skeleta of all nine $3$-spheres showed that none of the nine remaining
  $8$-vertex $3$-spheres contain such a contractible non-collapsing
  $2$-complex. Thus all of them are, after removing a tetrahedron,
  extendably collapsible.

\end{proof}

In order to complement the qualitative study on collapsibility given by 
Theorem~\ref{thm:eight}, we applied our uniform spanning tree heuristic to the 
$19$ and $20$ tetrahedra $8$-vertex $3$-spheres using $10^7$ samples for each 
complex. This gives a more detailed estimate for the collapsing probability
in the $8$-vertex case.

The results are listed in Table~\ref{tab:eight} together with their edge
variance, which will be discussed as indicator for the collapsing probability
in Section~\ref{sec:nearlyNonColl}.

\begin{center}
  \begin{longtable}{|l|l|r|r|}
    \caption{Estimated collapsing probabilities and edge variance of  
    $8$-vertex $3$-sphere triangulations with 
    $19$ and $20$ tetrahedra, sample size 
    $N = 10^7$. \label{tab:eight}} \\
    
    \hline
    name in \cite{Gruenbaum67Enum8vtxSpheres}&$f$-vector / isomorphism signature*&coll. prob.&edge var. \\
    \hline
    \hline
    \endfirsthead

    \multicolumn{4}{l}%
    { \tablename\ \thetable{} -- continued from previous page} \\
    \hline
    name in \cite{Gruenbaum67Enum8vtxSpheres}&$f$-vector / isomorphism signature*&coll. prob.&edge var. \\
    \endhead

    \hline \multicolumn{4}{r}{{continued on next page --}} \\
    \endfoot

    \hline
    \endlastfoot
    &\multicolumn{3}{l|}{$(8,27,38,19)$} \\
    \hline
    $B$      &\texttt{deeffag.hbg.hag.hbg.hchdgbh.hehgh}** &$99.99902 \%$ &$0.83951$\\
    $P_{32}$ &\texttt{deeffaf.gbg.gbgbh.gbh.hch.hahghcg}   &$99.99922 \%$ &$0.83951$\\
    $P_{31}$ &\texttt{deeffaf.gbg.gbgbh.hbg.hch.hghbhjh}   &$99.99984 \%$ &$0.98765$\\
    $P_{30}$ &\texttt{deeffaf.gbg.gbhbg.hbh.gfhahchehgg}   &$99.99990 \%$ &$0.98765$\\
    $P_{33}$ &\texttt{deeffaf.gbh.hbhbg.gbg.gbhcgegag.g}   &$99.99988 \%$ &$1.13580$\\
    \hline
    &\multicolumn{3}{l|}{$(8,28,40,20)$} \\
    \hline
    $M$      &\texttt{deeffaf.gbh.gbgbh.gbh.hch.hbg.gehcg}*** &$99.99657 \%$ &$0.91837$\\
    $P_{37}$ &\texttt{deeffaf.gbg.gbgbh.hbh.hch.hahehah.h}    &$99.99942 \%$ &$1.20408$\\
    $P_{36}$ &\texttt{deeffaf.gbg.gbhbg.hbh.hchbhahcheh.h}    &$99.99968 \%$ &$1.20408$\\
    $P_{35}$ &\texttt{deeffaf.gbg.gbhbh.hbh.gdhahahchfgfg}    &$99.99997 \%$ &$1.34694$\\
    \hline
  \end{longtable}
\end{center}

\small
\noindent
* The isomorphism signature of a combinatorial manifold uniquely determines its isomorphism
type, i.e., two combinatorial manifolds have equal isomorphism signature if and only if
they are isomorphic. The isomorphism signature given in this table coincides with the one used
by simpcomp \cite{simpcomp,simpcompISSAC,simpcompISSAC11}. Use the function
\texttt{SCFromIsoSig(...)} to generate the complexes. See the manual for details.

\smallskip
\noindent
** This is Barnette's sphere.

\smallskip
\noindent
*** This is Br\"uckner's sphere.
\normalsize

\subsubsection*{$9$-vertex spheres}

There are $1296$ triangulated $9$-vertex $3$-spheres first described
in \cite{Altshuler76CombMnf9VertAll}. We sampled $5\cdot 10^4$ spanning trees
for each of them. The results are summarised below where all $3$-sphere
triangulations with equal number of tetrahedra $n$ are grouped together.
Similar to the $8$-vertex case, a higher number of tetrahedra 
correlates with a lower collapsing probability. The rightmost column
shows the complex with the smallest collapsing probability for each 
class of triangulations. Note that some of the empirical collapsing 
probabilities below are too close to $1$ in order to give robust estimators.

\newpage
\begin{center}
  \begin{longtable}{|r|r|r|r|}
    \caption{Estimated collapsing probabilities of $9$-vertex $3$-sphere 
    triangulations grouped by $f$-vector, and minimal estimated(!) collapsing 
    probability per group, sample size $N = 5\cdot 10^4$ per complex. \label{tab:nine}} \\
    
    \hline
    $n$&$\#$ complexes&avg. coll. prob.&min. coll. prob. \\
    \hline
    \hline
    \endfirsthead

    \multicolumn{4}{l}%
    { \tablename\ \thetable{} -- continued from previous page} \\
    \hline
        $n$&$\#$ complexes&avg. coll. prob.&min. coll. prob. \\
    \endhead

    \hline \multicolumn{4}{r}{{continued on next page --}} \\
    \endfoot

    \hline
    \endlastfoot
    $17$&  $7$&$100.00000 \%$ &$100.000 \%$\\
    \hline
    $18$& $23$&$100.00000 \%$ &$100.000 \%$\\
    \hline
    $19$& $45$&$100.00000 \%$ &$100.000 \%$\\
    \hline
    $20$& $84$&$99.99993 \%$  &$99.998 \%$\\
    \hline
    $21$&$128$&$99.99983 \%$  &$99.998 \%$\\
    \hline
    $22$&$175$&$99.99952 \%$  &$99.996 \%$\\
    \hline
    $23$&$223$&$99.99898 \%$  &$99.994 \%$\\
    \hline
    $24$&$231$&$99.99753 \%$  &$99.980 \%$\\
    \hline
    $25$&$209$&$99.99443 \%$  &$99.962 \%$\\
    \hline
    $26$&$121$&$99.99051 \%$  &$99.952 \%$\\
    \hline
    $27$& $50$&$99.98024 \%$  &$99.920 \%$\\
    \hline
  \end{longtable}
\end{center}

In Section~\ref{sec:nearlyNonColl} below we discuss how the square of the 
average difference between an edge degree and the average edge degree of a 
triangulation, the {\em edge variance} (cf. Definition~\ref{def:edgevar}), influences
the collapsing probability of a triangulation. Essentially, the findings of
Section~\ref{sec:nearlyNonColl} suggest that a smaller edge variance correlates 
with a lower collapsing probability. The following table lists empirical 
collapsing probabilities for the triangulation with minimum edge variance for 
each class of triangulations with fixed $f$-vector with a much higher number
of $10^6$ samples per complex. Compare the estimated collapsing probabilities 
with the values from the table above and with the values
given for the $8$- and $10$-vertex case.     

\begin{center}
  \begin{longtable}{|r|l|@{\,}l@{\,}l@{\,}l@{\,}l@{\,}l@{\,  }|r|}
    \caption{Estimated collapsing probabilities of $9$-vertex, $n$-tetrahedron
    $3$-sphere triangulations with minimum edge variance, $23\leq n \leq 27$,
    sample size $N = 10^6$ per complex. \label{tab:ninemin}} \\
    
    \hline
    $n$&isomorphism signature*&\multicolumn{5}{l|}{edge degrees}&coll. prob. \\
    \hline
    \hline
    \endfirsthead

    \multicolumn{8}{l}%
    { \tablename\ \thetable{} -- continued from previous page} \\
    \hline
    $n$&isomorphism signature*&\multicolumn{5}{l|}{edge degrees}&coll. prob. \\
    \endhead

    \hline \multicolumn{8}{r}{{continued on next page --}} \\
    \endfoot

    \hline
    \endlastfoot
    $23$& \texttt{deeffag.hbg.iag.ibh.ichbidh.ibhbi.hfipijh}         & &$3^{5}$&$4^{14}$&$5^{11}$&$6^{2}$      &$99.9981 \%$\\
    \hline
    $24$& \texttt{deeffaf.gbh.gbgbi.gbh.hch.ibg.geicg.hgigiji}       & &$3^{6}$&$4^{13}$&$5^{10}$&$6^{4}$      &$99.9912 \%$\\
    \hline
    $25$& \texttt{deeffaf.gbh.gbgbi.gbh.ici.ibi.gbibhchchcikhdg}     & &$3^{6}$&$4^{12}$&$5^{12}$&$6^{4}$      &$99.9766 \%$\\
    \hline
    $26$& \texttt{deeffaf.gbh.gbgbh.gbh.ici.hbi.ibibhaiag.ifihhjg}   & $3^{6}$&$4^{13}$&$5^{11}$&$6^{4}$&$7^{1}$ &$99.9485 \%$\\
    \hline
    $27$& \texttt{deeffaf.gbh.gbgbi.gbh.ici.hbi.ibibhaiahahahcihhjg} & &$3^{6}$&$4^{12}$&$5^{15}$&$7^{3}$      &$99.9007 \%$\\
    \hline
  \end{longtable}
\end{center}

The full list of complexes and their empirical collapsing probabilities for
$5\cdot 10^4$ samples is available from the authors upon request.

\subsubsection*{$10$-vertex spheres}

There are $247\,882$ triangulated $9$-vertex $3$-spheres first described
in \cite{Lutz08ThreeMfldsWith10Vertices}. We sampled $5\cdot 10^3$ spanning trees
for each of them. The results grouped by number of tetrahedra are summarised 
in the table below.

\begin{center}
  \begin{longtable}{|r|r|r|r|}
    \caption{Estimated collapsing probabilities of $10$-vertex $3$-sphere triangulations 
    grouped by $f$-vector, and minimal estimated(!) collapsing probability per group,
    sample size $N = 5\cdot 10^3$ per complex. \label{tab:ten}} \\
    
    \hline
    $n$&$\#$ complexes&avg. coll. prob.&min. coll. prob. \\
    \hline
    \hline
    \endfirsthead

    \multicolumn{4}{l}%
    { \tablename\ \thetable{} -- continued from previous page} \\
    \hline
    $n$&$\#$ complexes&avg. coll. prob.&min. coll. prob. \\
    \endhead

    \hline \multicolumn{4}{r}{{continued on next page --}} \\
    \endfoot

    \hline
    \endlastfoot
    $20$&   $30$&$100.00000 \%$&       $100.00 \%$\\
    \hline
    $21$&  $124$&$100.00000 \%$&       $100.00 \%$\\
    \hline
    $22$&  $385$&$99.99990 \%$&$99.98 \%$\\
    \hline
    $23$&  $952$&$99.99989 \%$&$99.98 \%$\\
    \hline
    $24$& $2142$&$99.99966 \%$&$99.96 \%$\\
    \hline
    $25$& $4340$&$99.99936 \%$&$99.96 \%$\\
    \hline
    $26$& $8106$&$99.99860 \%$&$99.94 \%$\\
    \hline
    $27$&$13853$&$99.99750 \%$&$99.92 \%$\\
    \hline
    $28$&$21702$&$99.99521 \%$&$99.90 \%$\\
    \hline
    $29$&$30526$&$99.99144 \%$&$99.86 \%$\\
    \hline
    $30$&$38553$&$99.98578 \%$&$99.80 \%$\\
    \hline
    $31$&$42498$&$99.97656 \%$&$99.72 \%$\\
    \hline
    $32$&$39299$&$99.96899 \%$&$99.52 \%$\\
    \hline
    $33$&$28087$&$99.94089 \%$&$99.40 \%$\\
    \hline
    $34$&$13745$&$99.91159 \%$&$99.16 \%$\\
    \hline
    $35$& $3540$&$99.87571 \%$&$99.20 \%$\\
    \hline
  \end{longtable}
\end{center}

Again, for each number of tetrahedra, we ran $10^6$ samples on the
triangulation with minimum edge variance. 

\begin{center}
  \begin{longtable}{|r|l|@{\,}l@{\,}l@{\,}l@{\,}l@{\,}l@{\,  }|r|}
    \caption{Estimated collapsing probability of $10$-vertex, $n$-tetrahedron
    $3$-sphere triangulation with minimum edge variance, $27\leq n \leq 35$,
    sample size $N = 10^6$ per complex. \label{tab:tenmin}} \\
    
    \hline
    $n$&isomorphism signature*&\multicolumn{5}{l|}{edge degrees}&coll. prob. \\
    \hline
    \hline
    \endfirsthead

    \multicolumn{8}{l}%
    { \tablename\ \thetable{} -- continued from previous page} \\
    \hline
    $n$&isomorphism signature*&\multicolumn{5}{l|}{edge degrees}&coll. prob. \\
    \endhead

    \hline \multicolumn{8}{r}{{continued on next page --}} \\
    \endfoot

    \hline
    \endlastfoot

    $27$& {\footnotesize \texttt{deeffaf.gbh.gbibj.ibh.jcj.jbg.iciahcigjghcidijjki}}&
      &$3^{4}$&$4^{18}$&$5^{12}$&$6^{3}$&$99.9974 \%$\\
    \hline
    $28$& {\footnotesize \texttt{deeffaf.gbh.gbibj.ibh.hch.jbg.iciajci.hgjgjbidibjwi}}&
      &$3^{4}$&$4^{19}$&$5^{10}$&$6^{5}$&$99.9917 \%$\\
    \hline
    $29$& {\footnotesize \texttt{deefgaf.hbg.hbi.iai.j.iaj.hdiaj.j.ibj.jajchcjkhgjdirh}}&
      &$3^{5}$&$4^{16}$&$5^{13}$&$6^{5}$&$99.9739 \%$\\
    \hline
    $30$& {\footnotesize \texttt{deeffag.hbi.jag.ibh.jbi.ibj.ichchbj.hbj.jbgcgdjchcggjDh}}&
      &&&$3^{10}$&$5^{30}$&$99.9612 \%$\\
    \hline
    $31$& {\footnotesize \texttt{deeffaf.gbh.ibjbh.ibh.hbi.i.hbi.jeg.ibgcgdgdifjcjcjbgcitj}}&
      &$3^{7}$&$4^{10}$&$5^{19}$&$6^{5}$&$99.8560 \%$\\
    \hline
    $32$& {\footnotesize \texttt{deefgaf.hbg.ibj.jah.i.jag.jbi.jci.jeigf.jbfgi.hbicj.fjhdhah}}&
      $3^{5}$&$4^{14}$&$5^{18}$&$6^{4}$&$7^{1}$&$99.7397 \%$\\
    \hline
    $33$& {\footnotesize \texttt{deefgaf.hbg.hbi.iah.j.iag.icicj.ibj.jajgf.ibfgj.hbjcjghgfdh.h}}&
      $3^{5}$&$4^{15}$&$5^{16}$&$6^{6}$&$7^{1}$&$99.5714 \%$\\
    \hline
    $34$& {\footnotesize \texttt{deefgaf.hbi.gbh.iahaiag.jbj.jbg.g.hafcg.ibjcf.jbjgcdigjfjhjbccc}}&
      $3^{6}$&$4^{12}$&$5^{19}$&$6^{6}$&$7^{1}$&$99.2457 \%$\\
    \hline
    $35$& {\footnotesize \texttt{deefgaf.hbi.gbh.hajajai.hbgaibj.hbjbjeiafaiciaj.gbjcj.ijghg.ggi.i}}&
      $3^{5}$&$4^{17}$&$5^{16}$&$6^{2}$&$7^{5}$&$99.1755 \%$\\
    \hline
  \end{longtable}
\end{center}
The full list of complexes and their empirical collapsing probabilities for
$5\cdot 10^3$ samples is available from the authors upon request.

\bigskip
Altogether, if we restrict ourselves to $v$-vertex $2$-neighborly $3$-sphere
triangulations, $v\leq 10$, we have for the average empirical collapsing 
probability
\begin{center}
  \begin{tabular}{|r|r|r|}
    \hline
    $v$&$\#$ complexes&$1 - $ avg. coll. prob. \\
    \hline
    \hline
    $\leq 7$& $3$&$0.0000000$\\
    \hline
    $8$& $4$&$0.0000109$\\
    \hline
    $9$& $50$&$0.0002064$\\
    \hline
    $10$& $3540$&$0.0012429$\\
    \hline
  \end{tabular}
\end{center}
This very limited amount of data already exhibits a rapid increase in the 
proportion of non-collapsing sequences for triangulations of increasing size.
This supports speculations that pathologically complicated combinatorial 
objects become rapidly more common as the size of a triangulation increases. 
However, whether or not these numbers actually suggests that the average 
collapsing probability approaches $0$ as $v \to \infty$ remains unclear.

%
%
%

\section{Producing nearly non-collapsible $3$-sphere triangulations}
\label{sec:nearlyNonColl}

In this section, we propose a heuristic to pre-evaluate if a 
$3$-sphere triangulation has complicated 
combinatorial properties (such as very few or no collapsing sequences) or not,
based on the simple combinatorial property of the edge degrees of the 
triangulation. Of course, there must be theoretical limits to how effective
this pre-evaluation can be due to the potential hardness of the underlying
problem, but a deeper understanding of the impact of 
simple, local combinatorial structures on (non-)collapsing sequences 
gives valuable insights into triangulations with pathological 
combinatorial characteristics.

\medskip
Let $S$ be a triangulation of the $3$-sphere with $f$-vector 
$f(S) = (v,v+n,2n,n)$ and for any edge $e \in \operatorname{skel}_1 (S)$ 
let $\operatorname{deg}_S (e)$ be the edge degree of $e$ in $S$ (i.e., the
number of tetrahedra of $S$ containing $e$).
Furthermore, let $\Gamma(S)$ be the face pairing graph of $S$ and let 
$T \subset \Gamma(S)$ be a spanning tree.
The $2$-dimensional simplicial complex obtained by collapsing $S$ along $T$ 
is denoted by ${S}_T$, and $e$ is called {\em free} in ${S}_T$ if 
$e$ has degree one in ${S}_T$ (i.e., it is only contained in one triangle of
$S_T$).

For any given spanning tree $T \subset \Gamma(S)$, we expect the number of 
free edges in ${S}_T$ (more precisely, the number of triangles with free edges) 
to strongly correlate with whether or not $T$ leads to a collapsing sequence: 
triangles of ${S}_T$ can be removed as long as there are free edges left and the 
removal of any triangle has a clear tendency to produce new free edges. The 
more free edges there are to begin with, the higher we expect the chances to be
that in this process all triangles can be removed.

In more concrete terms, let $ \operatorname{skel}_1 (S) = \{ 
e_1 , \ldots , e_{n+v} \} $ be the set of edges of $S$ and let
  $$p_i = | \{ e_i \textrm{ free in } S_T \, | \, T \textrm{ spanning 
    tree of } S \} | \,\, /\,\, | \{ \textrm{spanning trees of } S \} | $$
be the probability that edge $e_i$ is free in $S_T$ for $T$ sampled 
uniformly at random. Then, the vector $ (p_1, \ldots , p_{n+v} ) $
contains information about the collapsing probability of $S$.

Obviously, the exact value of the $p_i$ depends on the structure
of $S$ and getting a precise estimate for the $p_i$ involves sampling
spanning trees (which, at the same time, gives us an estimator for the 
collapsing probability -- the quantity we want to pre-evaluate).
Instead we argue that the degree of the edge $e_i$, a quantity which can be 
very easily extracted from $S$, influences the value of $p_i$ by virtue of the
following observation. 

\begin{theorem}
	\label{thm:freedegdedges}
	Let $S$ be a triangulation of the $3$-sphere and let 
  $e_i \in \operatorname{skel}_1 (S)$ be an edge of $S$ of degree 
	$\operatorname{deg}_S (e_i) = k$. Furthermore, let $p_i$ be the proportion of 
  spanning trees of $\Gamma (S)$ for which $e_i$ is free in $S_T$. Then
		$$ p_i \quad \geq \quad \frac{4}{7} \cdot \left ( \frac{4}{13} \right )^{k-2}.$$
\end{theorem}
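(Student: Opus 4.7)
The plan is to reformulate the freeness of $e_i$ as an event about $T \cap C_i$ for a specific cycle $C_i$ in the dual graph, and then bound its probability by an iterated conditional argument combined with a local electrical estimate.

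Because $S$ is a triangulated $3$-manifold, the link of $e_i$ is a cycle of length $k$, so the $k$ triangles of $S$ containing $e_i$ correspond bijectively to the arcs $a_1, \ldots, a_k$ of a $k$-cycle $C_i$ in the dual graph $\Gamma(S)$. A triangle around $e_i$ survives the collapse along $T$ precisely when the corresponding arc is not used in $T$, so $e_i$ is free in $S_T$ if and only if exactly one arc of $C_i$ lies outside $T$, equivalently $T \cap C_i$ is a Hamiltonian path of $C_i$; in particular,
$$p_i \;\geq\; \mathbb{P}[a_1, \ldots, a_{k-1} \in T].$$

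To estimate this joint probability I would use the standard UST identity that conditioning on $a_1, \ldots, a_{j-1} \in T$ is distributionally the same as sampling a UST of the contracted graph $G_{j-1} := \Gamma(S)/a_1/\cdots/a_{j-1}$. This yields
$$\mathbb{P}[a_1, \ldots, a_{k-1} \in T] \;=\; \prod_{j=1}^{k-1}\mathbb{P}\bigl[a_j \in T(G_{j-1})\bigr].$$
After $j-1$ contractions, the cycle vertices $u_1, \ldots, u_j$ have merged into a super-vertex $v^\ast$ of degree $4j - 2(j-1) = 2j + 2$ in $G_{j-1}$: two of its incident arcs are the remaining cycle arcs $a_j$ and $a_k$, while the other $2j$ are external arcs inherited from the $4$-regularity of $\Gamma(S)$.

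The remaining step is to lower-bound each factor $\mathbb{P}[a_j \in T(G_{j-1})]$ uniformly over the (a priori arbitrary) external structure. For this I would invoke the electrical-network identity $\mathbb{P}[a_j \in T(G_{j-1})] = R_{\mathrm{eff}}(v^\ast, u_{j+1}; G_{j-1})$ with unit resistances, and apply the Dirichlet principle with a test voltage function that exploits the fixed $4$-regular structure around $v^\ast$. The base case $j=1$ should yield the factor $4/7$ (reflecting that the two endpoints of $a_1$ carry $4+4-1 = 7$ local incidences), and each subsequent contraction should contribute the uniform factor $4/13$.

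The main obstacle is this last step. Since the external structure of $\Gamma(S)$ beyond the immediate neighbourhood of $C_i$ can be essentially arbitrary, a uniform lower bound requires pinning down the worst-case external configuration---likely via shorting sufficiently distant vertices into a single super-node and computing the reduced effective resistance by series-parallel or Y--$\Delta$ simplifications. Verifying that the test voltage function achieves the tight constants $4/7$ and $4/13$ under the constraints imposed by the $4$-regularity of $\Gamma(S)$ is where the main computational effort lies.
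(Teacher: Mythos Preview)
Your setup is correct and matches the paper's: the edge $e_i$ is free in $S_T$ precisely when $T\cap C$ is a Hamiltonian path of the $k$-cycle $C$ in $\Gamma(S)$. However, your route via contraction and effective resistance cannot produce the stated constants, and the gap is not merely computational.

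The first factor already fails. Take $S=\partial\Delta^4$, the boundary of the $4$-simplex: here $\Gamma(S)=K_5$, every edge of $S$ has degree $k=3$, and by symmetry $\mathbb{P}[a_1\in T]=4/10=2/5$ (a spanning tree of $K_5$ uses $4$ of the $10$ arcs). Thus the effective-resistance lower bound you would extract is at most $2/5$, strictly less than $4/7$. More decisively, for this same example one computes directly $\mathbb{P}[a_1,a_2\in T]=3/25$, which is strictly smaller than the theorem's bound $\tfrac{4}{7}\cdot\tfrac{4}{13}=\tfrac{16}{91}$ for $k=3$. So bounding $p_i$ below by the probability of a \emph{single} Hamiltonian path and then factoring via contraction cannot prove the theorem as stated; your heuristic ``$4+4-1=7$'' does not correspond to any valid resistance computation. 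For large $j$ the shorting bound on your $j$-th factor is $\tfrac{2j+4}{8j+7}\to\tfrac14<\tfrac{4}{13}$, so the discrepancy grows with $k$.

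The paper's argument is different in kind: it uses the Aldous--Broder random-walk construction of the UST. One waits until the walk first enters the cycle, relabels that node $\Delta_1$, and then lower-bounds the probability that the walk proceeds around the cycle $\Delta_1\to\Delta_2\to\cdots\to\Delta_k$, allowing only length-two excursions to external or already-visited neighbours between successive cycle steps. The factor $4/7$ arises at the first step because the walk may go to \emph{either} cycle neighbour $\Delta_2$ or $\Delta_k$ (two of four directions are good), and summing the geometric series over immediate-return excursions gives $\tfrac12\cdot(1-\tfrac18)^{-1}=\tfrac47$. Each subsequent factor $4/13$ comes from the analogous computation where only one of four directions advances along the cycle while three of four lead to harmless already-visited neighbours, giving $\tfrac14\cdot(1-\tfrac{3}{16})^{-1}=\tfrac{4}{13}$. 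The ``either direction'' freedom at the entry point is precisely what your single-path decomposition discards, and it is what lifts the leading constant above the $2/5$ barrier.
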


\begin{proof}
  Since $S$ is a simplicial complex, the star of $e_i$ is represented in 
  $\Gamma (S)$ as the boundary of a $k$-gon $C = \langle \Delta_1 , \Delta_2 , 
  \ldots , \Delta_{k} \rangle$ with no chords. To see this note that
  a chord in the $k$-gon represents an identification of triangular
  boundary faces of the star of $e_i$ in $S$ which contradicts the 
  simplicial complex property (cf. Figure~\ref{fig:spindles} on the left hand 
  side). Moreover, for any spanning tree $T \subset \Gamma (S)$, $e_i$ is 
  free in $S_T$ if and only if $T$ intersects $C$ in a (connected) path of 
  length $k-1$.

  A spanning tree $T \subset \Gamma (S)$ can be found by following a random walk
  in $\Gamma (S)$ discarding all arcs on the way which close a cycle.
  Here, we only concentrate on the probability of one particular class 
  of random walks which always result in a $2$-complex $S_T$ 
  in which $e_i$ is free.
  
  W.l.o.g., let $\Delta_1$ be the first node in $\Gamma (S)$ which is visited
  by the random walk in step $m$. One way for 
  $T \cap C$ to be a path of length $k-1$ is if the random walk travels 
  through all arcs $ (\Delta_j , \Delta_{j+1} )$, $1 \leq j \leq k-1$, 
  travelling back and forth between nodes $\Delta_j$ and $u_j$, $v_j$ or 
  $\Delta_{j-1}$ on the way (cf. Figure~\ref{fig:spindles} on the right hand 
  side).  In step $m+1$ the walk is at one of $\Delta_2$ or $\Delta_k$ with 
  probability $\frac{1}{2}$. If the random walk does not 
  choose one of these two options, there is an overall $\frac{1}{8}$ chance 
  that it revisits $\Delta_1$ in step $m+2$ without visiting any of
  the $\Delta_{\ell}$ first (remember: there are no chords in the $k$-cycle).
  Moreover, there is an at least $\frac{1}{64}$ chance that it
  revisits $\Delta_1$ in step $m+4$, etc. Altogether, there is a
  $$ \sum  \limits_{i \geq 0} 8^{-i} \quad = \quad \frac{4}{7} $$
  chance that the random walk hits $\Delta_2$ or $\Delta_k$ such that
  it is still in the class of random walks we are considering (and, in 
  particular, can still produce a spanning tree leading to a free edge $e_i$).

  \begin{figure}
    \begin{center}
      \includegraphics[width=.8\textwidth]{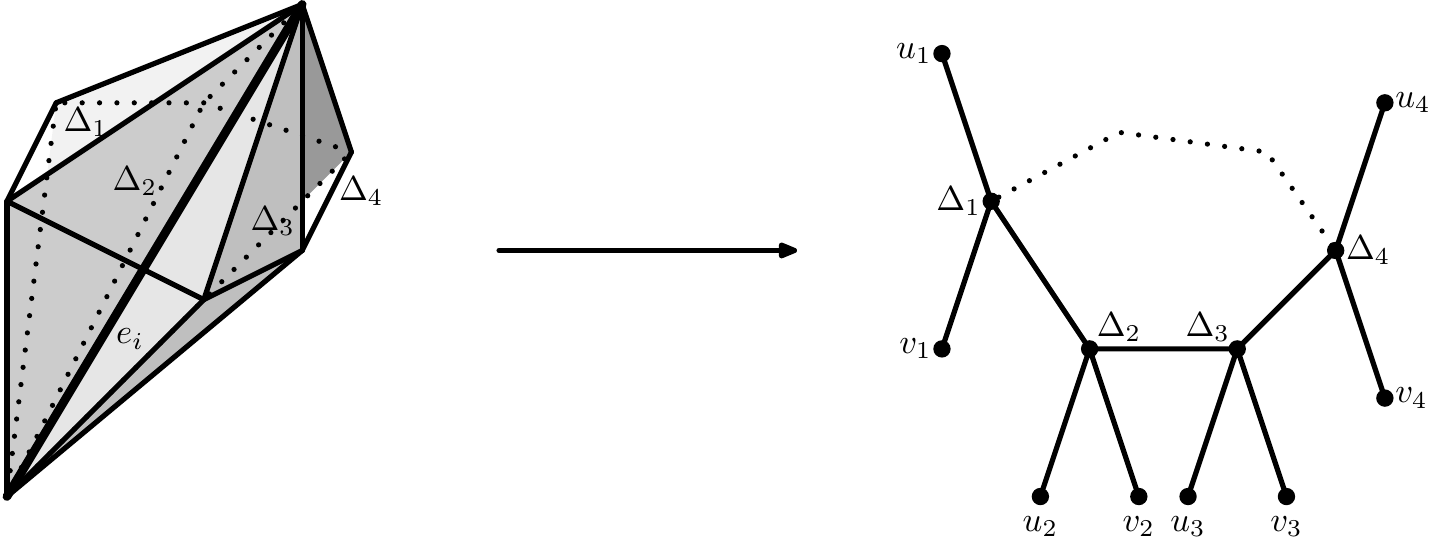}
      \caption{Left: star of an edge in a simplicial $3$-dimensional
          triangulation. Right: the corresponding feature in the 
          dual graph. \label{fig:spindles}}
    \end{center}
  \end{figure}

  W.l.o.g., let the random walk be now at $\Delta_2$ (if it is at 
  $\Delta_k$ we relabel). There is a $\frac{1}{4}$ chance that the random walk
  is at $\Delta_3$ in the next step and a $\frac{3}{16}$ chance
  that the random walk revisits $\Delta_2$ after two steps (without hitting 
  any node other than $u_2$, $v_2$ or $\Delta_1$. Hence, there is an overall
  chance of
  $$ \frac{1}{4} \cdot \sum \limits_{i\geq 0} \left ( \frac{3}{16} \right )^{i} 
    \quad = \quad \frac{4}{13} $$
  that the random walk reaches $\Delta_3$ and still is in the class of 
  random walks we are considering (and, in particular, still has the chance to 
  produce a spanning tree leading to a free edge $e_i$).
  Iterating the argument proves the theorem.
\end{proof}

The above lower bounds are not expected to be sharp, especially not for higher
degrees. However, finding effective upper bounds for the $ p_i $'s is 
difficult due to the unknown global structure of $S$.
Nonetheless, Theorem \ref{thm:freedegdedges} supports the intuitive 
assumption that edges of small degrees have a much higher chance of becoming 
free than higher degree edges. 

\medskip
To see how the lower bounds given in Theorem~\ref{thm:freedegdedges} compare
to the actual values of the $p_i$ we take a closer look at the set of 
$2$-neighbourly $9$-vertex $3$-sphere triangulations. Namely, for
every $9$-vertex $2$-neighbourly $3$-sphere $S$ and for every edge $e_i \in S$,
$1 \leq i \leq n+v$,
we uniformly sample $10^5$ spanning trees $T$ and record how often 
$e_i$ is a free edge in $S_T$ -- the $2$-complex obtained by collapsing all 
tetrahedra of $S$ along $T$. We then compare this number to the degree 
$\operatorname{deg}_S(e_i)$ of $e_i$.

The results of this experiment are shown in Figure~\ref{fig:degExpts}. 
On the horizontal axis all $50$ $2$-neighbourly $9$-vertex triangulations 
of the $3$-sphere are listed. The vertical axis lists the estimators 
$\hat{p}_i$, $1 \leq i \leq n+v$, of the probability of the edge $e_i$ to 
be free in $S_T$ with spanning tree $T$ chosen uniformly at random.
Note how the estimators $\hat{p_i}$ display an exponential decay in the degree 
of the edges, as suggested by Theorem~\ref{thm:freedegdedges}. Moreover,
for most edges $e_i$ of degree $\operatorname{deg}_S(e_i)$, we have for the 
estimator $\hat{p}_i \sim 2^{2-\operatorname{deg}_S(e_i)}$.

\begin{figure}
  \begin{center}
    \input{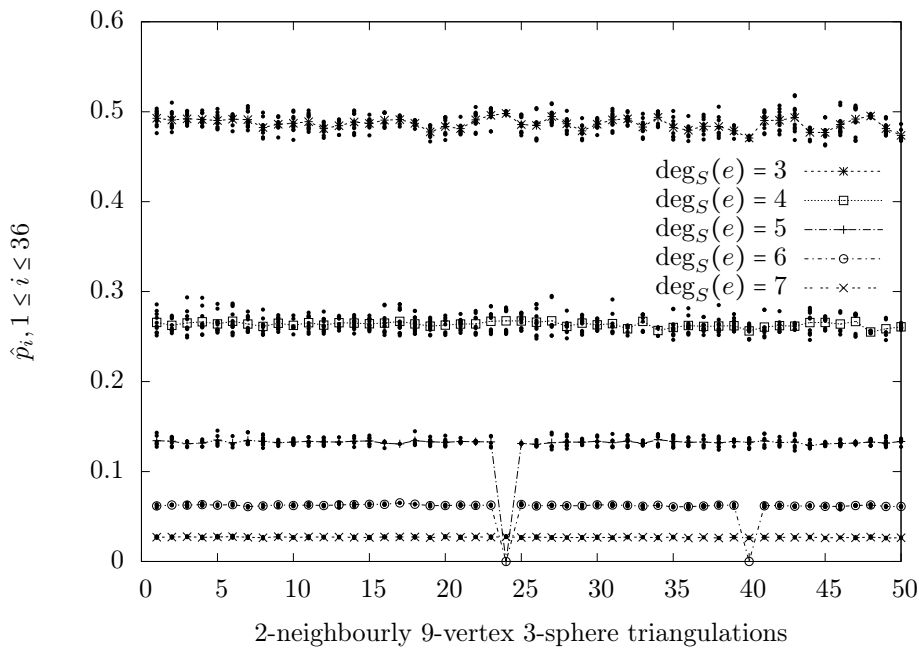}    
    \caption{Every dot represents the estimated probability 
      $\hat{p}_i$ of an edge $e_i$, $1 \leq i \leq 36$, in one of the 
      $2$-neighbourly $9$-vertex $3$-sphere triangulations being free after 
      collapsing the $3$-cells along a uniformly sampled spanning tree. 
      The lines indicate the average probability of an edge being free 
      grouped by degree. Sample size $10^5$. \label{fig:degExpts}}
  \end{center}
\end{figure}

\subsection*{The edge variance of a triangulation}

Assuming the above observations about the quantities $p_i$ hold in reasonable 
generality this motivates the following strategy to produce complicated 
triangulations of the $3$-sphere (i.e., triangulations of the $3$-sphere
with collapsing probability near zero).

\medskip
Let $S$ be an $n$-tetrahedra $v$-vertex triangulation of the $3$-sphere. Every
tetrahedron has six edges and the total number of edges in $S$ is given
by $n+v$. Hence, the average edge degree of an edge in $S$ is given by
$$ \overline{\operatorname{deg}}_{S} = \frac{6 n}{n+v}. $$
In addition, since $S$ is simplicial, every edge degree must be at least three.

We have seen in the above section that, given a spanning tree 
$T \subset \Gamma (S)$ chosen uniformly at random, the value $p_i$ for
edge $e_i$ being of degree $k$ can be assumed to decay exponentially in $k$.
At the same time, it is reasonable to assume that, at least on average, 
collapsing sequences. 

Combining these two statements, we can expect that $3$-sphere
triangulations with few edges of degree three and four should be, on average,
more difficult to collapse than triangulations with many low degree edges.
To quantify this property we define the following combinatorial invariant.

\begin{definition}
  \label{def:edgevar}
  Let $S$ be an $n$-tetrahedron, $v$-vertex simplicial triangulation of 
  the $3$-sphere, and let $\overline{\operatorname{deg}}_{S} = \frac{6 n}{n+v}$
  be its average edge degree. The quantity
    $$ \operatorname{var} (S) = \frac{1}{n+v} \sum \limits_{i=1}^{n+v} 
      (\overline{\operatorname{deg}}_{S} - \operatorname{deg}_S (e_i))^2 $$
  is referred to as the {\it edge variance} of $S$.
\end{definition}

Given a $3$-sphere triangulation $S$, computing $\operatorname{var} (S)$
is a very simple procedure but might give away valuable hints towards
the collapsing probability of $S$. 

Note that this measure must fail in general since non-collapsibility
(i.e., collapsing probability zero) can be a local feature of a
triangulation and thus cannot always be picked up by the 
edge variance (which has a global characteristic). Nonetheless, first experiments 
with heuristics based on the edge variance seem promising (see below).

\begin{remark}
  For experimental evidence that the edge variance is indeed a valuable
  measure of complicatedness, compare the average collapsing probability
  of $8$-, $9$- and $10$-vertex $3$-sphere in Tables~\ref{tab:eight}, 
  \ref{tab:ninemin} and \ref{tab:tenmin},
  and the collapsing probabilities for spheres with minimal edge variance
  in Tables~\ref{tab:eight}, \ref{tab:nine}, and \ref{tab:ten} respectively.
  Keep in mind that some of the complexes admit very few non-collapsing 
  sequences and thus only the estimators of near-neighbourly triangulations
  can be assumed to be reasonably robust.
\end{remark}

\subsection*{A heuristic to produce complicated $3$-sphere triangulations}

The heuristic to produce complicated $3$-sphere triangulations is 
straightforward: Given a $3$-sphere triangulation $S$, we perform bistellar 
one- and two-moves in order to reduce the edge variance.
The heuristic follows a simulated annealing approach where phases of 
reducing the edge variance are followed by phases where the edge variance is
deliberately increased. For a much more detailed discussion of simulated 
annealing type simplification heuristics based on bistellar moves see 
\cite{Bjoerner00SimplMnfBistellarFlips}. The complex with currently the smallest 
edge variance is stored and returned after a maximum number of moves is 
performed.

\medskip
As a proof of concept, we are able to produce a $15$-vertex $3$-sphere $S_{15}$
triangulation with only $2.5903 \pm 0.0618 \%$ collapsing sequences,
error probability $0.01 \%$.
Its facet list is given in Appendix~\ref{app:fiveteen}. See
the table below for a comparison of this number with 
known small and complicated $3$-sphere triangulations from 
\cite{Benedetti13RandomDMT}.

\bigskip
\begin{center}
 \begin{tabular}{|l|l|r|}
  \hline
  triangulation&$f$-vector&exp. coll. prob. ($N = 10^6$) \\
  \hline
  \hline
  $S_{15}$&$(15,105,180,90)$&$0.025903 \pm 0.000618$ \\ 
  \hline
  \texttt{trefoil}&$(13,69,112,56)$&$0.839725 \pm 0.001427$ \\ 
  \hline
  \texttt{double\_trefoil}&$(16,108,184,92)$&$0.193914 \pm 0.001538$ \\ 
  \hline
  \texttt{triple\_trefoil}&$(18,143,250,125)$&$0.000000 \pm 0.000000$ \\ 
  \hline
  \hline
 \end{tabular}
\end{center}
%
%
%
%

\bigskip
Extending this approach to a more sophisticated heuristic with optimised 
parameters is work in progress.

The potential of this approach lies in applying this framework to
the inverse problem of producing a collapsible triangulation.  
Given a $3$-manifold triangulation $M$, one of the most basic approaches to 
prove that $M$ is a $3$-sphere is to describe a collapsing sequence of 
$M$. This approach, however, cannot work for non-collapsible $3$-sphere
triangulations.
Using the idea of the edge variance, and given a complicated triangulation 
of a $3$-manifold, we first try to increase its edge variance 
and then try to collapse it. 

\medskip
Implementing such a heuristic is simple. However, testing its effectiveness is 
not: as of today there are simply not enough small but complicated $3$-spheres
known to test this approach against existing heuristics.

\section*{Acknowledgement}

This work is supported by CNPq, FAPERJ, PUC-Rio, CAPES, and 
the Department of Industry and Science, Australia under the Australia-India 
Strategic Research Fund (project AISRF06660).

	{\footnotesize
	 \bibliographystyle{abbrv}
	 \bibliography{../../bibliography-combtop/bibliography}

\begin{thebibliography}{10}

\bibitem{Adiprasito14RDMTII}
K.~Adiprasito, B.~Benedetti, and F.~H. Lutz.
\newblock Random discrete morse theory ii and a collapsible 5-manifold
  different from the 5-ball.
\newblock {\tt arXiv:1404.4239 [math.CO]}, 20 pages, 6 figures, 2 tables, 2014.

\bibitem{Aldous90RandomWalkUST}
D.~J. Aldous.
\newblock The random walk construction of uniform spanning trees and uniform
  labelled trees.
\newblock {\em SIAM J. Discret. Math.}, 3(4):450--465, Nov. 1990.

\bibitem{Altshuler76CombMnf9VertAll}
A.~Altshuler and L.~Steinberg.
\newblock An enumeration of combinatorial $3$-manifolds with $9$ vertices.
\newblock {\em Discrete Math.}, 16:91--108, 1976.

\bibitem{Bagchi05CombTrigHomSpheres}
B.~Bagchi and B.~Datta.
\newblock {C}ombinatorial triangulations of homology spheres.
\newblock {\em Discrete Math.}, 305(1-3):1--17, 2005.

\bibitem{Bagchi08UniqWalkup9V3DimKleinBottle}
B.~Bagchi and B.~Datta.
\newblock {U}niqueness of {W}alkup's 9-vertex 3-dimensional {K}lein bottle.
\newblock {\em Discrete Math.}, 308(22):5087--5095, 2008.

\bibitem{Barnette69BarnettesSphere}
D.~Barnette.
\newblock {Diagrams and Schlegel Diagrams}.
\newblock In {\em {Combinatorial Structures and Their Applications, Proc.
  Calgary Internat. Conferenc}}, pages 1--4. {Gordon and Breach}, 1969.

\bibitem{Benedetti13NonExtColl3Ball}
B.~Benedetti and F.~H. Lutz.
\newblock The dunce hat in a minimal non-extendably collapsible $3$-ball.
\newblock {Electronic Geometry Model No. 2013.10.001}, 2013.

\bibitem{Benedetti13NonCollapsible}
B.~Benedetti and F.~H. Lutz.
\newblock Knots in collapsible and non-collapsible balls.
\newblock {\em Electron. J. Combin.}, 20(3):Paper 31, 2013.

\bibitem{Benedetti13RandomDMT}
B.~Benedetti and F.~H. Lutz.
\newblock Random discrete {M}orse theory and a new library of triangulations.
\newblock {\em Exp. Math.}, 23(1):66--94, 2014.

\bibitem{Benedetti11LocConstrBalls}
B.~Benedetti and G.~M. Ziegler.
\newblock On locally constructible spheres and balls.
\newblock {\em Acta Math.}, 206(2):205--243, 2011.

\bibitem{Bjoerner00SimplMnfBistellarFlips}
A.~Bj{\"o}rner and F.~H. Lutz.
\newblock {S}implicial manifolds, bistellar flips and a 16-vertex triangulation
  of the {P}oincar{\'e} homology 3-sphere.
\newblock {\em Experiment. Math.}, 9(2):275--289, 2000.

\bibitem{Brinkmann07FastGenPlanarGraphs}
G.~Brinkmann and B.~D. McKay.
\newblock Fast generation of planar graphs.
\newblock {\em MATCH Commun. Math. Comput. Chem.}, 58(2):323--357, 2007.

\bibitem{plantri}
G.~Brinkmann and B.~D. McKay.
\newblock {plantri, Version 4.5}.
\newblock \url{https://cs.anu.edu.au/people/Brendan.McKay/plantri/}, 2011.

\bibitem{Broder89RandomWalkUST}
A.~Broder.
\newblock Generating random spanning trees.
\newblock {\em Foundations of Computer Science}, pages 442--447, 1989.

\bibitem{Brueckner09FourPolytope}
M.~Br\"uckner.
\newblock {\"Ueber die Ableitung der allgemeinen Polytope und die nach
  Isomorphismus verschiedenen Typen der allgemeinen Achtzelle (Oktatope)}.
\newblock {\em Verh. Nederl. Akad. Wetensch. Afd. Natuurk. Sect. I},
  10(1):291--324, 1909.

\bibitem{Burton04Regina}
B.~A. Burton.
\newblock Introducing {R}egina, the 3-manifold topology software.
\newblock {\em Experiment. Math.}, 13(3):267--272, 2004.

\bibitem{Burton14Crushing}
B.~A. Burton.
\newblock A new approach to crushing 3-manifold triangulations.
\newblock {\em Discrete Comput. Geom.}, 52(1):116--139, 2014.

\bibitem{Burton09Regina}
B.~A. Burton, R.~Budney, W.~Pettersson, et~al.
\newblock Regina: normal surface and 3-manifold topology software, version
  4.97.
\newblock {\tt http://\allowbreak regina.\allowbreak sourceforge.\allowbreak
  net/}, 1999--2015.

\bibitem{Durhuus95LC3Spheres}
B.~Durhuus and T.~J{\'o}nsson.
\newblock Remarks on the entropy of {$3$}-manifolds.
\newblock {\em Nuclear Phys. B}, 445(1):182--192, 1995.

\bibitem{simpcomp}
F.~Effenberger and J.~Spreer.
\newblock simpcomp - a {GAP} package, {V}ersion 2.1.1.
\newblock \url{https://github.com/simpcomp-team/simpcomp}, 2009--2014.

\bibitem{simpcompISSAC}
F.~Effenberger and J.~Spreer.
\newblock simpcomp - a {GAP} toolbox for simplicial complexes.
\newblock {\em ACM Communications in Computer Algebra}, 44(4):186 -- 189, 2010.

\bibitem{simpcompISSAC11}
F.~Effenberger and J.~Spreer.
\newblock Simplicial blowups and discrete normal surfaces in the {GAP} package
  simpcomp.
\newblock {\em ACM Communications in Computer Algebra}, 45(3):173 -- 176, 2011.

\bibitem{GAP4}
{GAP -- Groups, Algorithms, and Programming}, {Version 4.7.4}.
\newblock \url{http://www.gap-system.org}, 2014.

\bibitem{Gruenbaum67Enum8vtxSpheres}
B.~Gr\"unbaum and V.~P. Sreedharan.
\newblock An enumeration of simplicial $4$-polytopes with $8$ vertices.
\newblock {\em J. Comb. Theory 2}, pages 437--465, 1967.

\bibitem{Guenoche83RandomSpanningTrees}
A.~Gu{\'e}noche.
\newblock Random spanning tree.
\newblock {\em J. Algorithms}, 4(3):214--220, 1983.

\bibitem{Hass123SphereCoNP}
J.~Hass and G.~Kuperberg.
\newblock {The complexity of recognizing the 3-sphere}.
\newblock {\em Oberwolfach Reports}, 24:1425--1426, 2012.

\bibitem{Lutz04SmallExNonconstrSimplBallsSph}
F.~H. Lutz.
\newblock {S}mall examples of nonconstructible simplicial balls and spheres.
\newblock {\em SIAM J. Discrete Math.}, 18(1):103--109, 2004.

\bibitem{Lutz08ThreeMfldsWith10Vertices}
F.~H. Lutz.
\newblock Combinatorial 3-manifolds with 10 vertices.
\newblock {\em Beitr\"age Algebra Geom.}, 49(1):97--106, 2008.

\bibitem{McKay83SpanningTreesRegGraphs}
B.~D. McKay.
\newblock Spanning trees in regular graphs.
\newblock {\em European J. Combin.}, 4(2):149--160, 1983.

\bibitem{Joswig14SphereRec}
M.~T. Michael~Joswig, Frank H.~Lutz.
\newblock Heuristics for sphere recognition.
\newblock {\tt arXiv:1405.3848 [math.GT]}, 8 pages, 2014.

\bibitem{Rubinstein953SphereRec}
J.~H. Rubinstein.
\newblock An algorithm to recognize the {$3$}-sphere.
\newblock In {\em Proceedings of the International Congress of Mathematicians
  ({Z}{\"u}rich, 1994)}, volume~1, pages 601--611. Birkh{\"a}user, 1995.

\bibitem{Schleimer11SphereRecNP}
S.~Schleimer.
\newblock Sphere recognition lies in {NP}.
\newblock In {\em Low-dimensional and symplectic topology}, volume~82 of {\em
  Proc. Sympos. Pure Math.}, pages 183--213. Amer. Math. Soc., Providence, RI,
  2011.

\bibitem{Tancer12CollNPComplete}
M.~Tancer.
\newblock Recognition of collapsible complexes is np-complete.
\newblock {\tt arXiv:1211.6254 [cs.CG]}, 2012.

\bibitem{Volodin743SphereRec}
I.~A. Volodin, V.~E. Kuznecov, and A.~T. Fomenko.
\newblock The problem of the algorithmic discrimination of the standard
  three-dimensional sphere.
\newblock {\em Uspehi Mat. Nauk}, 29(5(179)):71--168, 1974.
\newblock Appendix by S. P. Novikov.

\bibitem{whitehead1939simplicial}
J.~H.~C. Whitehead.
\newblock Simplicial spaces, nuclei and m-groups.
\newblock {\em Proceedings of the London mathematical society}, 2(1):243--327,
  1939.

\bibitem{Wilson96UST}
D.~B. Wilson.
\newblock Generating random spanning trees more quickly than the cover time.
\newblock In {\em Proceedings of the {T}wenty-eighth {A}nnual {ACM} {S}ymposium
  on the {T}heory of {C}omputing ({P}hiladelphia, {PA}, 1996)}, pages 296--303.
  ACM, New York, 1996.

\bibitem{Zeeman64DunceHat}
E.~C. Zeeman.
\newblock On the dunce hat.
\newblock {\em Topology}, 2:341--358, 1964.

\bibitem{zeeman1966seminar}
E.~C. Zeeman.
\newblock {\em Seminar on combinatorial topology}.
\newblock Institut des hautes {\'e}tudes scientifiques, 1966.

\end{thebibliography}
	}

\newpage

\appendix

\section{Non-collapsing sequences of $22$ of the $39$ $3$-sphere triangulations with $8$-vertices}
\label{app:eightV}

\begin{center}

\end{center}

\end{document}